\documentclass[a4paper, 11pt]{amsart}   
\usepackage{mathptmx, amssymb,amscd,latexsym, eulervm}   
\usepackage{amsmath}
\usepackage{amsthm}
\usepackage{mathdots}
\usepackage[pagebackref]{hyperref}
\hypersetup{colorlinks=true,linkcolor=blue,citecolor=blue,urlcolor=blue,breaklinks=true}
\usepackage{color}
\usepackage{setspace}
\usepackage{comment}
\usepackage{tabularx}
\usepackage{amsfonts}
\usepackage{paralist}
\usepackage{aliascnt}
\usepackage[initials, lite]{amsrefs}
\usepackage{amscd}
\usepackage{blkarray}
\usepackage{mathbbol}
\usepackage{setspace}
\usepackage[inner=2.4cm,outer=2.4cm,bottom=3.2cm]{geometry}
\usepackage{tikz, tikz-cd}
\usepackage{calligra,mathrsfs}
\usepackage{tikz}
\usetikzlibrary{matrix}
\usetikzlibrary{arrows,calc}
\allowdisplaybreaks

\BibSpec{collection.article}{%
	+{}  {\PrintAuthors}                {author}
	+{,} { \textit}                     {title}
	+{.} { }                            {part}
	+{:} { \textit}                     {subtitle}
	+{,} { \PrintContributions}         {contribution}
	+{,} { \PrintConference}            {conference}
	+{}  {\PrintBook}                   {book}
	+{,} { }                            {booktitle}
	+{,} { }                            {series}
	+{, vol.} { }                            {volume}
	+{,} { }                            {publisher}
	+{,} { \PrintDateB}                 {date}
	+{,} { pp.~}                        {pages}
	+{,} { }                            {status}
	+{,} { \PrintDOI}                   {doi}
	+{,} { available at \eprint}        {eprint}
	+{}  { \parenthesize}               {language}
	+{}  { \PrintTranslation}           {translation}
	+{;} { \PrintReprint}               {reprint}
	+{.} { }                            {note}
	+{.} {}                             {transition}
	+{}  {\SentenceSpace \PrintReviews} {review}
}
\AtBeginDocument{%
	\def\MR#1{}
}

\newcommand{\Coker}{\normalfont\text{Coker}}

\newcommand{\Spec}{{\normalfont\text{Spec}}}

\newtheorem{theorem}{Theorem}[section]

\newtheorem{headthm}{Theorem}

\newaliascnt{headcor}{headthm}
\newtheorem{headcor}[headcor]{Corollary}
\aliascntresetthe{headcor}

\newaliascnt{headconj}{headthm}

\aliascntresetthe{headconj}

\newaliascnt{corollary}{theorem}
\newtheorem{corollary}[corollary]{Corollary}
\aliascntresetthe{corollary}

\newaliascnt{claim}{theorem}

\aliascntresetthe{claim}

\newaliascnt{lemma}{theorem}
\newtheorem{lemma}[lemma]{Lemma}
\aliascntresetthe{lemma}

\newaliascnt{conjecture}{theorem}

\aliascntresetthe{conjecture}

\newaliascnt{proposition}{theorem}
\newtheorem{proposition}[proposition]{Proposition}
\aliascntresetthe{proposition}

\theoremstyle{definition}
\newaliascnt{definition}{theorem}
\newtheorem{definition}[definition]{Definition}
\aliascntresetthe{definition}

\newaliascnt{notation}{theorem}

\aliascntresetthe{notation}

\newaliascnt{example}{theorem}

\aliascntresetthe{example}

\newtheorem*{example*}{Example}
\newaliascnt{examples}{theorem}

\aliascntresetthe{examples}

\newaliascnt{remark}{theorem}
\newtheorem{remark}[remark]{Remark}
\aliascntresetthe{remark}

\newaliascnt{question}{theorem}

\aliascntresetthe{question}

\newaliascnt{questions}{theorem}

\aliascntresetthe{questions}

\newaliascnt{problem}{theorem}

\aliascntresetthe{problem}

\newaliascnt{construction}{theorem}

\aliascntresetthe{construction}

\newaliascnt{setup}{theorem}

\aliascntresetthe{setup}

\newaliascnt{algorithm}{theorem}

\aliascntresetthe{algorithm}

\newaliascnt{observation}{theorem}

\aliascntresetthe{observation}

\newaliascnt{defprop}{theorem}

\aliascntresetthe{defprop}

\DeclareFontFamily{OT1}{pzc}{}
\DeclareFontShape{OT1}{pzc}{m}{it}{<-> s * [1.100] pzcmi7t}{}
\DeclareMathAlphabet{\mathchanc}{OT1}{pzc}{m}{it}

\def\equationautorefname~#1\null{(#1)\null}
\def\sectionautorefname~#1\null{Section #1\null}
\def\subsectionautorefname~#1\null{\S #1\null}

\newcommand{\im}{\text{im}}

\makeatletter
\@namedef{subjclassname@2020}{
  \textup{2020} Mathematics Subject Classification}
\makeatother

\begin{document}

\title[Boundedness of Geometrically Integral $\varepsilon$-klt Del Pezzo Surfaces]{Boundedness of Geometrically Integral $\varepsilon$-klt Del Pezzo Surfaces}

\author[Shikha Bhutani]{Shikha Bhutani}
\address{Shikha Bhutani\\ Department of Mathematics, Michigan State University,
Wells Hall
619 Red Cedar Rd
East Lansing, MI 48824}\email{bhutani4@msu.edu}

\author[Sudipta Das]{Sudipta Das}
\address{Sudipta Das\\ School of Mathematics,
Tata Institute of Fundamental Research
Dr. Homi Bhabha Road, Colaba
Mumbai 400 005, Maharashtra, India}\email{sudiptad@math.tifr.res.in}

\subjclass[2020]{Primary 14J45, 14E30; Secondary 14G17, 14E05, 14D06.}
\keywords{del Pezzo surfaces, BAB conjecture, positive characteristic, imperfect fields, irregularity, Mori fibre spaces, wild conic bundles, $p$-Fermat hypersurfaces.}
\begin{abstract}
We prove the remaining positive characteristic cases of the Borisov--Alexeev--Borisov conjecture for geometrically integral $\varepsilon$-klt del Pezzo surfaces over arbitrary fields. Bernasconi and Martin established boundedness outside characteristics $2$, $3$, and $5$, and reduced the remaining cases to a uniform bound for the irregularity. We prove this bound, thereby completing the boundedness theorem in all positive characteristics.
\end{abstract}
\maketitle
\section{Introduction}

The Borisov--Alexeev--Borisov conjecture predicts that, for fixed dimension and fixed $\varepsilon>0$, the $\varepsilon$-klt Fano varieties form a bounded family.  The toric case was proved by Borisov--Borisov \cite{BorisovBorisov}, while the surface case was proved by Alexeev and developed further by Alexeev--Mori \cites{AlexeevBoundedness, AlexeevMori}; Birkar proved the conjecture in all dimensions in characteristic zero \cite{BirkarBAB}.  

Over perfect fields, the positive characteristic surface case is known; see \cites{AlexeevBoundedness, AlexeevMori}.
A uniform boundedness statement over algebraically closed fields of characteristic $p>5$ is given in \cite[Lemma~3.1]{CTW}. 
Over imperfect fields, regularity need not imply smoothness and generic fibres of Mori fibre spaces may exhibit inseparable phenomena.
These issues have been studied in \cites{FanelliSchroer, PatakfalviWaldron, JiWaldron, TanakaPathologies, BernasconiTanaka}.  More recently, Bernasconi--Tanaka classified geometrically integral regular del Pezzo surfaces that are not geometrically normal \cite{BernasconiTanakaGeometry}.

A basic imperfect field phenomenon is that irregularity need not vanish.  Schr\"oer constructed normal del Pezzo surfaces and regular weak del Pezzo surfaces with positive irregularity \cite{SchroerIrregularity}. Maddock constructed regular del Pezzo surfaces with $q(X):=h^1(X,\mathcal O_X)>0$ \cite{MaddockIrregularity}, and Tanaka constructed regular surfaces of del Pezzo type with positive irregularity \cite{TanakaPathologies}.  Tanaka proved that geometrically integral regular del Pezzo surfaces over arbitrary fields form a bounded family \cite{TanakaBoundedness}.  For singular del Pezzo surfaces, however, irregularity remains a genuine boundedness issue because it enters the Euler characteristic and hence the Hilbert polynomial. Bernasconi and Martin proved the geometrically integral $\varepsilon$-klt del Pezzo surface case of BAB over arbitrary fields in every positive characteristic except $2$, $3$, and $5$, and reduced the remaining cases to a uniform bound for the irregularity; see \cite[Theorem~1.3 and Remark~5.13]{BernasconiMartin}.  The following theorem provides this bound.

\begin{headthm}\label{thm:main}
Let $X$ be a geometrically integral $\varepsilon$-klt del Pezzo surface over a field $k$ of characteristic $p\in\{2,3,5\}$.  Then
$$
q(X)\leq 1+\frac{1}{\varepsilon}.
$$
\end{headthm}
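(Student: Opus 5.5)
We reduce to a Mori fibre space and bound the irregularity through the base and the general fibre. Let $X$ be as in Theorem~\ref{thm:main}. We may replace $k$ by its algebraic closure in $K(X)$; geometric integrality gives $H^0(X,\mathcal O_X)=k$. Take the minimal resolution $\pi\colon Y\to X$. Since $X$ is klt, it has rational singularities, so $q(X)=q(Y)$. Running a $K_Y$-MMP on the regular surface $Y$ ends either in a regular del Pezzo surface of Picard rank one or in a conic bundle $Z\to B$ over a regular curve $B$. In the first case $q=0$ by Tanaka's boundedness and vanishing results for regular del Pezzo surfaces \cite{TanakaBoundedness}, since the irregular examples are of a different type. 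So the main case is $f\colon Z\to B$ with $f_*\mathcal O_Z=\mathcal O_B$. Because $Y\to Z$ is a sequence of blow-downs of regular surfaces, which preserve $h^1(\mathcal O)$, we get $q(Y)=q(Z)$.

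For the conic bundle, Leray gives $q(Z)=h^1(B,\mathcal O_B)+h^0(B,R^1f_*\mathcal O_Z)$. Each fibre is a conic, so $R^1f_*\mathcal O_Z=0$, and therefore $q(X)=h^1(\mathcal O_B)$. The key point is that $B$ can have positive arithmetic genus in characteristic $p\le 5$ only because $B$ is regular but not smooth over $k$. We need to bound its genus using the $\varepsilon$-klt condition on $X$.

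The strategy is to compare canonical classes.

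1. Write $K_Y=\pi^*K_X-\sum a_iE_i$ with $0\le a_i\le 1-\varepsilon$. Then $-K_Y+\sum a_iE_i$ is nef and big.
2. Push this divisor to $Z$, giving an effective $\mathbb{Q}$-divisor $\Delta_Z=\sum a_iE_i'$ with $-(K_Z+\Delta_Z)$ big. The pair $(Z,\Delta_Z)$ has coefficients at most $1-\varepsilon$, though its singularities are only controlled away from the contracted curves.
3. Restrict to a horizontal component. Since $-(K_Z+\Delta_Z)$ is big, either there is a section-like curve $C$ with positive degree over $B$, or we use the canonical bundle formula $K_Z+\Delta_Z\sim_{\mathbb Q} f^*(K_B+\Delta_B+M_B)$ for a suitable choice.
4. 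In the canonical bundle formula with big anti-canonical class, $\deg(K_B)<\deg(\text{boundary contributions})$. The genus change from inseparability enters $K_B$ through wild conic bundles, following the Bernasconi–Tanaka and Patakfalvi–Waldron analysis of generic fibres \cite{PatakfalviWaldron, BernasconiTanaka}.

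The concrete inequality to aim for is the following. Let $F$ be a general fibre, with $-K_Z\cdot F=2$. Choose the extremal ray on $Y$ to keep a horizontal exceptional curve $E$ with coefficient $a\le 1-\varepsilon$ and $E\cdot F=d\ge 1$. Adjunction on the normalization of $E$ together with bigness of $-(K_Y+\sum a_iE_i)$ should give
$$
2h^1(\mathcal O_B)-2\le \deg K_{E^\nu}\le \frac{2}{\varepsilon}.
$$
This inequality, after normalizing by $[k_B:k]$, yields $q(X)\le 1+1/\varepsilon$.

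The main obstacle is the case where no horizontal exceptional curve exists, so that $X\to B$ itself descends. There the bound must come from the klt singularities along the non-reduced (wild) fibres. We would first try to show that in this case $B$ is geometrically rational using $K_B$-negativity coming from $-K_X$ ample. The remaining difficulty is to show that inseparable genus drops are detected only through exceptional curves with discrepancy bounded by $-1+\varepsilon$, which is where the constant $1/\varepsilon$ originates.
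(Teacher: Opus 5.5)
There is a genuine gap. Your reduction to $q(X)=q(Z)=h^1(\mathcal O_B)$ matches the paper. However, the bound on $h^1(\mathcal O_B)$, which is the whole content of the theorem, is not established, and the inequality you aim for is false exactly where it matters.

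\textbf{The Riemann--Hurwitz step fails.} The left half, $2h^1(\mathcal O_B)-2\le\deg K_{E^\nu}$, is a Riemann--Hurwitz inequality. It fails for inseparable maps of regular curves over imperfect fields, especially when the constant field grows. This is precisely the wild situation: anisotropy of the generic quasilinear conic survives separable extensions, so every horizontal curve is inseparable over $B$, and its constant field may be larger than $k$. For instance, over $k=\mathbb F_2(b)$ the regular curve $B\colon y^2=x^3+b$ has $h^1(\mathcal O_B)=1$. Adjoining $s=\sqrt x$ gives $(y+s^3)^2=b$, so $K(B)(s)=k(\sqrt b)(s)$. Thus $\mathbb P^1_{k(\sqrt b)}\to B$ is a purely inseparable double cover with $\deg_k K=-4<0$.

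\textbf{Further problems with the conic bundle argument.}
\begin{enumerate}
\item Exceptional curves of a klt point have $\deg K_{E^\nu}<0$. So your inequality would force $h^1(\mathcal O_B)=0$ whenever a horizontal exceptional curve exists, which is far stronger than the theorem.
\item The right half, $\deg K_{E^\nu}\le 2/\varepsilon$, has no argument behind it.
\item The canonical bundle formula in step~4 is unavailable. $-(K_Z+\Delta_Z)$ is big rather than $f$-trivial, and for wild conic bundles no formula controls $K_B$.
\item You leave the case without horizontal exceptional curves open, and the proposed remedy cannot work. The curve above is geometrically a cuspidal cubic, hence geometrically rational, yet $h^1(\mathcal O_B)=1$.
\end{enumerate}

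\textbf{The missing idea} is to make $g=h^1(\mathcal O_B)$ appear in an intersection number bounded from both sides. The paper writes $K_Z^2=K_{Z/B}^2+8(1-g)$. The relative anticanonical embedding $Z\subset\mathbb P_B(E)$, with $E=\pi_*\omega_{Z/B}^{-1}$ and $Z\sim 2H-\rho^*\det E$, gives $K_{Z/B}^2=\deg E$. It then bounds $\deg E$ in two cases:
\begin{enumerate}
\item If the generic conic is smooth (automatic in characteristics $3$ and $5$), the discriminant is an effective divisor in the class of $(\det E)^{-1}$, so $\deg E\le 0$.
\item In the wild characteristic-$2$ case, the equation is a Frobenius line $\det E\hookrightarrow F_B^*E$. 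Its second fundamental form for the canonical connection embeds $(F_B^*E/\det E)^\vee\otimes\det E$ into $(\Omega^1_{B/\mathbb F_2})_{\mathrm{tf}}$. Injectivity comes from $2$-independence of the coefficients, and geometric integrality of $Z$ makes the relative part rank one. Balancing the torsion of $\Omega^1_{B/k}$ against the non-saturation of the constant differentials then gives $\deg E\le 2g-2$.
\end{enumerate}
These upper bounds are played against the lower bound $K_Z^2\ge 8-\frac{32}{5\varepsilon}$ (resp.\ $8-\frac{6}{\varepsilon}$, using that $m_\Gamma$ is even). This comes from the Bernasconi--Martin extremal curve $\Gamma$, and $\varepsilon$ enters only through $n\le 2/\varepsilon$ together with $m_\Gamma\le 5$.

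\textbf{The regular del Pezzo case.} Your claim that $q=0$ when the MMP ends in a regular del Pezzo surface is false in characteristic $2$; Maddock's examples have $q>0$. The correct input is Bernasconi--Martin: $q\le1$ in characteristic $2$ and $q=0$ in characteristics $3$ and $5$. This still suffices for the theorem.
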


The dependence only on $\varepsilon$ is what is needed in the boundedness argument of Bernasconi--Martin.

\paragraph{\textbf{Strategy of the proof.}}
The proof begins with the minimal regular resolution $f:Y\to X$ and a $K_Y$-MMP.  Since klt surface singularities are rational, $h^1(\mathcal O)$ is unchanged along the birational steps.   The MMP either ends with a regular del Pezzo surface, covered by Bernasconi--Martin, or with a Mori fibre space $\pi:Z\to B$ over a regular projective curve.  In the latter case, Bernasconi--Tanaka show that every fibre is an irreducible reduced plane conic \cite[Proposition~2.18]{BernasconiTanaka}.  Since $R^1\pi_*\mathcal O_Z=0$, Leray gives $q(Z)=h^1(B,\mathcal O_B)$.  The problem is therefore reduced to bounding the genus of the regular, possibly non smooth, curve $B$.

The genus enters through the formula
$$
K_Z^2=K_{Z/B}^2+8\bigl(1-h^1(B,\mathcal O_B)\bigr).
.$$ 
Indeed, we show that $K_Z^2$ is bounded below by $\frac{-32}{5 \epsilon}$ when the generic fibre is smooth (\ref{prop:kz-lower-bound}) and by $8-\frac{6}{\epsilon}$ when the generic fibre is regular non smooth (\ref{prop:char2-wild}). This reduces the problem to analysing $K_{Z/B}^2$.  When the generic conic is smooth, Tanaka's discriminant  \cite{TanakaDiscriminant} gives that the discriminant is an effective divisor in the class $(\det E)^{-1}$, where $E=\pi_* \omega_{Z/B}^{-1}$ (\ref{lem:discriminant-degree-curve}) and one has $K_{Z/B}^2=\deg E$, (\ref{prop:relative-anticanonical-model}) which implies $K_{Z/B}^2\leq0$.  In characteristics $3$ and $5$ the generic conic is necessarily smooth, so this reduces the problem to characteristic $2$ when the generic fibre is non-smooth but regular (see \cite[Proposition~2.2]{WitaszekCBF}).

In characteristic $2$, we show that $K_{Z/B}^2$ is bounded above by $2g-2$, and that concludes the result.
This is achieved by establishing the morphism  $\theta:(F^*_B E/det(E))^\vee\otimes det(E) \longrightarrow\Omega^1_{B/\mathbb F_p}$  (\ref{thm:p-fermat-degree}), 
this morphism is essentially the dual of the map $\alpha$ in Mori-Saito, 
see \cite{MoriSaito}. We show this map is injective, thus allowing us to embed
$(F^*_B E/det(E))^\vee\otimes det(E) $ in $(\Omega^1_{B/{\mathbb{F}_p}})_{\mathrm{tf}}$,
whose composition with the natural map $(\Omega^1_{B/{\mathbb{F}_p}})_{\mathrm {tf}} \rightarrow (\Omega^1_{B/{k}})_{\mathrm{tf}}$ turns out to be generically rank one. 
For such an embedding of the vector bundle, we show that the degree of such a vector bundle is less than $ 2g-2$; see \ref{prop:differential-defect}.
Expanding on the degree of $(F^*_B E/det(E))^\vee\otimes det(E)$, we obtain an upper bound for $\deg E$, i.e. $\deg E \leq 2g-2$. This completes the sketch of the proof.   

These cases account for all outputs of the MMP and complete the outline of the proof of Theorem~A.

\begin{headcor}[Corollary~\ref{cor:all-positive-char-bab}]
Fix a rational number $\varepsilon>0$.  The class of geometrically integral $\varepsilon$-klt del Pezzo surfaces over fields of positive characteristic is bounded over $\Spec\mathbb Z$.
\end{headcor}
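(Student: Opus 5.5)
The corollary will follow by combining Theorem~\ref{thm:main} with the reduction of Bernasconi--Martin \cite[Theorem~1.3 and Remark~5.13]{BernasconiMartin}. I would first split by the characteristic $p$ of the base field. For $p\notin\{2,3,5\}$, boundedness of geometrically integral $\varepsilon$-klt del Pezzo surfaces is exactly \cite[Theorem~1.3]{BernasconiMartin}, so nothing more is needed there. For $p\in\{2,3,5\}$, Remark~5.13 of \emph{loc.\ cit.} says that their argument goes through verbatim once $q(X)=h^1(X,\mathcal O_X)$ is bounded by a constant depending only on $\varepsilon$. Theorem~\ref{thm:main} supplies precisely this bound, namely $q(X)\le 1+1/\varepsilon$.

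Concretely, I would trace how the irregularity enters their proof. They bound the Cartier index and the volume $K_X^2$ in terms of $\varepsilon$. Riemann--Roch for a bounded multiple $-mK_X$ then gives
\[
\chi(X,\mathcal O_X(-mK_X))=\chi(\mathcal O_X)+\tfrac{m(m+1)}{2}K_X^2.
\]
Here $\chi(\mathcal O_X)=1-q(X)+h^2(\mathcal O_X)$, and $h^2(\mathcal O_X)=0$ because $-K_X$ is ample. So the Hilbert polynomials of $-mK_X$ range over a finite set once $q(X)$ is bounded. The input $q(X)\le 1+1/\varepsilon$ then reduces this to finitely many possibilities. A very ampleness statement (effective with respect to the same data) embeds each $X$ in a fixed projective space over $k$. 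The finiteness of Hilbert polynomials then puts everything into finitely many Hilbert schemes over $\Spec\mathbb Z$.

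The final step is to collect the finitely many families for $p\le 5$ together with the family for $p>5$. Each comes from a projective scheme of finite type over $\Spec\mathbb Z$, since Hilbert schemes are defined over $\mathbb Z$. Their finite union gives boundedness over $\Spec\mathbb Z$.

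The main obstacle is checking that Remark~5.13 really requires only the irregularity bound. In particular, the very ampleness or embedding step in characteristics $2,3,5$ must not use the vanishing of $H^1$ in some other guise, such as $h^1(\mathcal O_X(-mK_X))$. If that hidden use does occur, I would bound $h^0$ via Fujita-type vanishing on the geometric normalization, or simply by $h^0\le\chi+h^1$ together with a bound on $h^1$ obtained from the same MMP reduction as in Theorem~\ref{thm:main}.
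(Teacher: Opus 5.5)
Your proposal is correct and follows essentially the same route as the paper. You cite Bernasconi--Martin for $p\notin\{2,3,5\}$, and for $p\in\{2,3,5\}$ you feed $q(X)\le 1+1/\varepsilon$, the bounded Cartier index and volume, $h^2(\mathcal O_X)=0$ and Riemann--Roch into a finite set of Hilbert polynomials, then conclude with a uniform embedding and Hilbert schemes over $\Spec\mathbb Z$. The paper settles the embedding step you flagged by applying Koll\'ar's boundedness of polarized reduced surfaces with fixed Hilbert polynomial over $\bar k$ and descending the closed immersion by flat base change, so no bound on $h^1(\mathcal O_X(-mK_X))$ is needed.
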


\section{Preliminaries}\label{sec:preliminaries}

For basic terminology on del Pezzo surfaces over imperfect fields, we follow \cite[Section~2]{BernasconiMartin} and \cite[Section~2]{BernasconiTanaka}.

\subsection{Notation and conventions}

Throughout the paper, $k$ denotes a field of characteristic $p>0$, and we write $\bar{k}$ (resp.\ $k^{\mathrm{sep}}$) for an algebraic (resp.\ separable) closure of $k$.
If $k'/k$ is a field extension and $X$ is a $k$ scheme, we write $X_{k'}:=X\times_k k'$.
We say that \( X \) is a \textit{variety over \( k \)}, or a \( k \)\textit{-variety}, if \( X \) is an integral scheme that is separated and of finite type over \( k \). A surface is a \( k \)-variety of dimension \( 2 \).
If $X$ is a projective integral $k$-variety, we set $q(X):=h^1(X,\mathcal O_X)=\dim_k H^1(X,\mathcal O_X)$.

For a coherent sheaf $\mathcal F$ on an integral scheme, we write $\mathcal F_{\mathrm{tor}}$
for its torsion subsheaf and $\mathcal F_{\mathrm{tf}}:=\mathcal F/\mathcal F_{\mathrm{tor}}$
for its torsion free quotient.

We say that a $k$-variety $X$ is \emph{geometrically reduced}, \emph{geometrically normal}, \emph{geometrically integral}, or \emph{geometrically regular} if $X_{\bar{k}}$ is so. We are careful to distinguish between \emph{regular} and \emph{smooth}. A $k$-variety $X$ is smooth at $x$ if and only if it is geometrically regular at $x$, and over an imperfect field a regular variety need not be smooth. A projective morphism $f\colon X\to Y$ of normal schemes is called a \emph{contraction} if $f_*\mathcal O_X=\mathcal O_Y$. By a \emph{Mori fibre space} we mean a projective contraction $f\colon X\to Y$ such that $X$ is $\mathbb Q$-factorial and klt, $\dim Y<\dim X$, the relative Picard number of $f$ is one, and $-K_X$ is $f$-ample.

\subsection{Del Pezzo surfaces and surfaces of del Pezzo type}
We fix the conventions used below and record the compatibility statements needed for resolutions and birational contractions.

\begin{definition}
A \emph{regular del Pezzo surface} over $k$ is a reduced projective regular surface $X$ with $-K_X$ ample and $H^0(X,\mathcal O_X)=k$.
\end{definition}

\begin{definition}
A normal projective surface $X$ over $k$ is an \emph{$\varepsilon$-klt del Pezzo surface} if $H^0(X,\mathcal O_X)=k$,
$X$ is $\varepsilon$-klt, and $-K_X$ is ample.

We say that a normal projective surface $X$ over $k$ is \emph{of del Pezzo type} if $H^0(X,\mathcal O_X)=k$
and there exists an effective $\mathbb Q$-divisor $\Delta$ such that $(X,\Delta)$ is klt and $-(K_X+\Delta)$ is big and nef.
\end{definition}

\begin{remark}\label{rem:two-del-pezzo-type-conventions}
Our definition is slightly different from the standard definition where  $-(K_X+\Delta)$ is
required to be ample instead of big and nef; see \cite[Definition~2.6]{BernasconiTanaka}, but we note that the two definitions are equivalent by choosing a different boundary.
\end{remark}

\begin{proposition}\label{prop:klt-rational-sing}
Let $X$ be the spectrum of a local excellent ring with dualising complex and suppose that $(X,\Delta)$ is a klt surface pair for some $\Delta\ge 0$. Then $X$ has rational and $\mathbb Q$-factorial singularities. In particular, if $X$ and $Y$ are projective klt surfaces that are $k$-birational, then $H^i(X,\mathcal O_X)\cong H^i(Y,\mathcal O_Y)$
for every $i\ge 0$.
\end{proposition}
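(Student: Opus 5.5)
The proof has two parts: the local statement that a klt surface singularity is rational and $\mathbb Q$-factorial, and the global comparison of cohomology for $k$-birational projective klt surfaces.

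\textbf{Local statement.} Let $X=\Spec R$ with $R$ a two-dimensional excellent local ring with dualising complex, and let $(X,\Delta)$ be klt. Since $\Delta\ge 0$, the surface $X$ itself is klt: discrepancies only increase when the boundary is removed, although $K_X$ need not a priori be $\mathbb Q$-Cartier. We therefore argue numerically on a minimal resolution.
\begin{enumerate}
\item Take a minimal resolution $g\colon W\to X$; it exists for excellent surfaces by Lipman.
\item By Mumford's pullback of Weil divisors we have $K_W=g^*K_X+\sum a_iE_i$ with $a_i>-1$.
\item Minimality makes $K_W$ $g$-nef, so the negative-definiteness lemma gives $a_i\le 0$. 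Hence $-1<a_i\le 0$.
\end{enumerate}
Rationality would then follow from Lipman's criterion: $R^1g_*\mathcal O_W=0$ holds iff $\chi(\mathcal O_Z)\ge 1$, i.e.\ $p_a(Z)\le 0$, for every effective exceptional cycle $Z$. I expect this to be the main obstacle in characteristic-free generality; I would try to cite it rather than reprove it. The route I would try first is Kollár's \emph{Kollár--Mori style argument} via vanishing for $-\lfloor\cdot\rfloor$ divisors on the resolution, which holds for excellent surfaces by Tanaka's or Lipman's vanishing $R^1g_*\mathcal O_W(K_W+N)=0$ for $g$-nef $N$. Writing $-\lfloor\sum a_iE_i\rfloor=0$ gives $R^1g_*\mathcal O_W=0$. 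For the precise reference I would cite Kollár, \emph{Singularities of the MMP}, Prop.~2.28, and Tanaka's treatment over excellent bases.

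$\mathbb Q$-factoriality would then follow from Lipman's theorem that a rational surface singularity has finite class group. Thus every Weil divisor is $\mathbb Q$-Cartier.

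\textbf{Global statement.} Given $k$-birational projective klt surfaces $X$ and $Y$:
\begin{enumerate}
\item Choose a common resolution $W$ with birational morphisms $g\colon W\to X$ and $h\colon W\to Y$. A resolution of the graph closure works, since resolutions exist for excellent surfaces.
\item Rationality at each closed point gives $R^1g_*\mathcal O_W=0$ and $g_*\mathcal O_W=\mathcal O_X$ (normality). The higher direct images $R^ig_*$ vanish for $i\ge 2$ because the fibres have dimension $\le 1$.
\item Leray then gives $H^i(X,\mathcal O_X)\cong H^i(W,\mathcal O_W)$, and symmetrically $H^i(Y,\mathcal O_Y)\cong H^i(W,\mathcal O_W)$.
\end{enumerate}
Rationality is independent of the chosen resolution, since any two resolutions are dominated by a third and blow-ups of regular points do not change $R^1$. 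Hence the local statement applies to the $g$ chosen here, which is a non-minimal resolution.
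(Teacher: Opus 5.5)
The paper itself gives no argument here; it simply cites Bernasconi--Martin, Proposition~2.6. Your unpacking has the right shape, and several pieces are fine as they stand:
\begin{itemize}
\item the bound $-1<a_i\le 0$ on the minimal resolution, via Mumford pullback, $g^*\Delta-g^{-1}_*\Delta\ge 0$, and the negativity lemma against the $g$-nef divisor $K_W$;
\item $\mathbb Q$-factoriality from Lipman's finiteness of the class group of a rational singularity;
\item the global comparison via a common resolution, the vanishing $R^ig_*\mathcal O_W=0$ for $i>0$, and Leray.
\end{itemize}

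The one step you actually argue is rationality, which is the heart of the proposition, and as written it fails for two reasons.
\begin{itemize}
\item The rounding is wrong. For $-1<a_i<0$ one has $\lfloor a_i\rfloor=-1$, so $-\lfloor\sum a_iE_i\rfloor=\sum_{a_i<0}E_i$, which is nonzero in general. The identity that actually holds is $\lceil\sum a_iE_i\rceil=0$.
\item The vanishing you quote cannot give $R^1g_*\mathcal O_W=0$. Applying $R^1g_*\mathcal O_W(K_W+N)=0$ for $g$-nef $N$ to $\mathcal O_W$ requires $N\equiv_g -K_W\equiv_g-\sum a_iE_i$ to be $g$-nef. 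The negativity lemma then forces $a_i\ge 0$, hence $a_i=0$ for all $i$. So this only covers the canonical case.
\end{itemize}

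What is missing is a Kawamata--Viehweg type vanishing with a fractional boundary for proper birational morphisms from regular excellent surfaces, such as Koll\'ar's vanishing theorem for two-dimensional regular schemes (Theorem~10.4 in his book). Write $\mathcal O_W\equiv_g K_W+\Delta_W+N$ with $\Delta_W:=-\sum a_iE_i$, so that $\lfloor\Delta_W\rfloor=0$, and $N=0$. Equivalently, $\mathcal O_W=\mathcal O_W(K_W+\lceil -g^*K_X\rceil)$ with $-g^*K_X\equiv_g 0$.

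With that replacement, the argument is complete. Alternatively, cite Koll\'ar's Proposition~2.28 directly, as your fallback does; that is in substance what the paper does through Bernasconi--Martin.
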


\begin{proof}
This is \cite[Proposition~2.6]{BernasconiMartin}.
\end{proof}

\section{Reduction to Conic Bundles}\label{sec:reduction}

In this section we reduce the irregularity problem to the geometry of a Mori fibre space. Starting from the minimal regular resolution of $X$, we run the surface MMP. Now birational invariance of $H^1(\mathcal O)$ allows us to work on its terminal output. If the MMP ends over a point, the resulting regular del Pezzo surface is covered by known results. The essential case is therefore a Mori fibre space over a curve, where the surface is a conic bundle and its irregularity is identified with that of the base. We record the field theoretic and numerical properties of this fibration that will be used in the subsequent analysis.

\subsection{MMP reduction}\label{subsec:regular-mmp}

Let $X$ be as in Theorem~\ref{thm:main}, and let
$f: Y\to X$ be the minimal regular resolution. We may assume there exists an effective boundary $\Delta_Y$ with $-(K_Y+\Delta_Y)$ ample.
Thus $K_Y$ has negative intersection with every ample divisor and is not
pseudo-effective. Since $Y$ is regular, it is $\mathbb Q$-factorial and
$(Y,0)$ is klt. As $Y$ is projective over the excellent base $\Spec k$, the surface MMP
\cite[Theorem~1.1]{TanakaMMP} yields a sequence of projective birational
contractions
$
Y=Y_0\to Y_1\to\cdots\to Y_r=Z
$
ending in a Mori fibre space $\pi:Z\to B$; see also
\cite[Theorem~3.6]{BFSZ}.

The surfaces $X$, $Y$, and $Z$ are projective, klt, and $k$ birational, so
Proposition~\ref{prop:klt-rational-sing} gives
$$
H^i(X,\mathcal O_X)\cong H^i(Y,\mathcal O_Y)
\cong H^i(Z,\mathcal O_Z)
$$
for every $i\geq0$. In particular, $q(X)=q(Y)=q(Z)$.

\subsection{Mori fibre spaces over curves}\label{subsec:conic-output}

We now assume that the MMP has fibre type output. 
Assume $\dim B=1$. By the Mori fibre space convention,
$\pi_*\mathcal O_Z=\mathcal O_B$ and $B$ is a normal projective integral
curve, hence regular. The morphism $\pi$ is flat: dominance and integrality
make $\mathcal O_{Z,z}$ torsion free over the discrete valuation ring
$\mathcal O_{B,b}$ for every closed $b\in B$ and $z\in Z_b$, so
\cite[Tag 0539]{StacksProject} applies. Flatness over the generic point is
automatic.

By Subsection~\ref{subsec:regular-mmp}, we have $H^0(Z,\mathcal O_Z)=k$. Since $\pi_*\mathcal O_Z=\mathcal O_B$, we also have $H^0(B,\mathcal O_B)=k$. Thus $\pi:Z\to B$ satisfies the hypotheses of \cite[Proposition~2.18]{BernasconiTanaka}, which shows that for every point $b\in B$, including the generic point, the fibre $Z_b$ is an irreducible reduced plane conic over $\kappa(b)$ and $H^0(Z_b,\mathcal O_{Z_b})=\kappa(b).$
In particular, since every closed fibre is irreducible and reduced, every vertical prime divisor on $Z$ is the reduced fibre over its image.

Thus $\pi$ is a conic bundle in the sense of
\cite[Definition~2.3]{TanakaDiscriminant}. By
\cite[Lemma~2.5]{TanakaDiscriminant},
$R^i\pi_*\mathcal O_Z=0$ for $i>0$. Together with
$\pi_*\mathcal O_Z=\mathcal O_B$, the Leray spectral sequence yields
$
H^1(Z,\mathcal O_Z)\cong H^1(B,\mathcal O_B),
$
and hence $q(Z)=h^1(B,\mathcal O_B).$

\begin{lemma}\label{lem:base-function-field-regular}
In the fibre type situation above, the function field $K(B)$ is a regular extension of $k$.
Equivalently, $k$ is algebraically closed in $K(B)$ and $K(B)/k$ is separably generated.
\end{lemma}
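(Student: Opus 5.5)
We have to show that $k$ is algebraically closed in $K(B)$ and that $K(B)/k$ is separable. We derive both from the geometric integrality of $X$, which passes to $Z$ because $X$ and $Z$ are $k$-birational. For finitely generated extensions, geometrically integral is the same as regular. So the whole argument runs through the inclusion of function fields $K(B)\subset K(Z)=K(X)$ induced by $\pi$.

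\emph{Step 1: $K(Z)/k$ is regular.} The surface $X$ is geometrically integral over $k$. Since $Z$ is $k$-birational to $X$, we have $K(Z)\cong K(X)$ over $k$. A finitely generated field extension $L/k$ is regular exactly when $L\otimes_k \bar k$ is a domain. For the function field of a variety, this holds exactly when the variety is geometrically integral, since $K(X)\otimes_k\bar k$ is a localisation of $\mathcal O(U)\otimes_k \bar k$ for an affine open $U\subset X$. Hence $K(Z)/k$ is regular.

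\emph{Step 2: regularity passes to subextensions.} Let $k\subset L'\subset L$ with $L/k$ regular. Then $L'\otimes_k\bar k\hookrightarrow L\otimes_k\bar k$, because $\bar k$ is flat over $k$. A subring of a domain is a domain, so $L'\otimes_k\bar k$ is a domain and $L'/k$ is regular. Apply this with $L'=K(B)$, which embeds in $K(Z)$ via $\pi^*$ since $\pi$ is dominant.

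\emph{Step 3: the two equivalent conditions.} By the standard characterisation (e.g.\ Stacks Project, Tag 030W together with Tags 030K and 030M), a regular extension is one in which $k$ is algebraically closed and which is separable. For a finitely generated extension, separable is the same as separably generated. We can also check the first condition directly. Let $a\in K(B)$ be algebraic over $k$. Then $a$ is a rational function on the normal projective curve $B$ that is integral over $k$, so $a\in H^0(B,\mathcal O_B)$. This ring equals $k$ by Subsection~\ref{subsec:conic-output}.

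The only step that needs care is Step 1, the passage between geometric integrality of the variety and regularity of its function field. Here one just has to say correctly that tensoring with $\bar k$ commutes with localisation. Everything else is formal.
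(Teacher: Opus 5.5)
Your proof is correct and follows essentially the paper's route: you deduce that $K(Z)=K(X)$ is regular over $k$ from the geometric integrality of $X$, and then pass to the subfield $K(B)\subset K(Z)$. The differences are in packaging only. You use the single criterion that $L\otimes_k\bar k$ is a domain, so descending to the subextension is a one-line flatness argument. The paper instead treats separability and algebraic closedness separately (Tags 04KS and 054Q) and cites Tag 030P for the intermediate field. Your additional check via $H^0(B,\mathcal O_B)=k$ and normality of $B$ is valid but redundant.
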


\begin{proof}
Geometric reducedness of $X$ makes $K(Z)=K(X)$ separable over $k$
\cite[Tag 04KS]{StacksProject}, while geometric irreducibility
makes $k$ separably algebraically closed in this field
\cite[Tag 054Q]{StacksProject}. Together these imply that $k$ is
algebraically closed in $K(Z)$. By
\cite[Tag 030P]{StacksProject}, the intermediate extension $K(B)/k$
is separable, and $k$ remains algebraically closed in $K(B)$.
Since $K(B)/k$ is finitely
generated, separability means it is separably generated
\cite[Tag 030O]{StacksProject}. Thus $K(B)/k$ is regular.
\end{proof}

\begin{lemma}\label{lem:geometric-integrality-output}
The terminal regular surface $Z$ is geometrically integral over $k$. If the MMP output is of fibre type $\pi:Z\to B$,
then the base curve $B$ is also geometrically integral over $k$.
\end{lemma}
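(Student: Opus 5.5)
Geometric integrality of $Z$ will come from the fact that $Z$ and $X$ are $k$-birational normal projective surfaces with the same function field. The plan is to reduce geometric integrality to a statement about $K(Z)=K(X)$ over $k$, and then to descend from $Z$ to $B$ through the inclusion $K(B)\subset K(Z)$.

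\textbf{Step 1: reduce to the function field.} For an integral $k$-variety $V$, I use two standard facts:
\begin{itemize}
\item $V$ is geometrically reduced if and only if $K(V)/k$ is separable \cite[Tag 04KS]{StacksProject};
\item $V$ is geometrically irreducible if and only if $k$ is separably algebraically closed in $K(V)$ \cite[Tag 054Q]{StacksProject}.
\end{itemize}
Hence geometric integrality of $V$ depends only on the extension $K(V)/k$. This holds because geometric reducedness and irreducibility of an integral variety are detected at the generic point, and base change to $\bar k$ commutes with localisation at the generic point.

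\textbf{Step 2: the surface $Z$.} Each step of the MMP $Y=Y_0\to\cdots\to Y_r=Z$ is a projective birational contraction, and $f:Y\to X$ is birational. Therefore
$$K(Z)=K(Y)=K(X).$$
Since $X$ is geometrically integral, Step 1 shows that $K(X)/k$ is separable and that $k$ is separably algebraically closed in $K(X)$. These properties transfer to $K(Z)$, and applying Step 1 again to $Z$ shows that $Z$ is geometrically integral.

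\textbf{Step 3: the base $B$ in the fibre type case.} Here I invoke Lemma~\ref{lem:base-function-field-regular}: $K(B)/k$ is separable and $k$ is algebraically closed in $K(B)$. By Step 1, $B$ is then geometrically reduced and geometrically irreducible, hence geometrically integral. One can also check irreducibility directly: $\pi_{\bar k}:Z_{\bar k}\to B_{\bar k}$ is surjective, being a base change of the surjective morphism $\pi$, and the source is irreducible, so the image is irreducible.

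The only delicate point is that Tag 054Q concerns geometric irreducibility, which requires separable algebraic closedness rather than full algebraic closedness, while Tag 04KS handles reducedness. Both tags apply to integral schemes of finite type over $k$, and $Z$ and $B$ are such schemes (for $B$ this uses $H^0(B,\mathcal O_B)=k$). I therefore do not expect a genuine obstacle beyond confirming these hypotheses.
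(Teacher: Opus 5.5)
Your proposal is correct and follows essentially the same route as the paper. Both arguments identify $K(Z)=K(X)$, obtain regularity of $K(B)/k$ from Lemma~\ref{lem:base-function-field-regular}, and then convert this function-field information into geometric reducedness and geometric irreducibility via \cite[Tag 04KS and Tag 054Q]{StacksProject}, hence geometric integrality.
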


\begin{proof}
The function field $K(Z)=K(X)$ is regular over $k$ because $X$ is
geometrically integral. In the fibre type case, $K(B)/k$ is also regular
by Lemma~\ref{lem:base-function-field-regular}. For an integral variety,
regularity of its function field implies geometric reducedness and
geometric irreducibility by
\cite[Tag 04KS and Tag 054Q]{StacksProject}, hence geometric
integrality by \cite[Tag 038K]{StacksProject}. This applies to $Z$ and,
when present, $B$.
\end{proof}

\begin{remark}\label{rem:base-not-smooth-warning}
The curve $B$ need not be smooth over $k$, i.e., over an imperfect field, regularity of $B$ and regularity of $K(B)/k$ do not imply geometric regularity.  Accordingly, the characteristic $2$ argument is formulated using the absolute Frobenius of $B$ and absolute differentials over $\mathbb F_2$.
\end{remark}

\begin{corollary}\label{cor:constant-field-degree-bound}
Let $\Gamma\subset Z$ be a horizontal integral curve, and put
$$
m_\Gamma:=[K(\Gamma):K(B)],
\qquad
 d_\Gamma:=[H^0(\Gamma,\mathcal O_\Gamma):k].
$$
Then
$
d_\Gamma\le m_\Gamma.
$
\end{corollary}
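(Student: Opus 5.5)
We will compare both degrees through the function field of $\Gamma$ and use that $k$ is algebraically closed in $K(B)$, which is Lemma~\ref{lem:base-function-field-regular}. Put $L:=H^0(\Gamma,\mathcal O_\Gamma)$. Since $\Gamma$ is an integral projective curve over $k$, $L$ is a field that is finite over $k$. It embeds into $K(\Gamma)$ as the constants of $\Gamma$. Since $\Gamma$ is horizontal, $\pi|_\Gamma:\Gamma\to B$ is a finite surjective morphism, so $K(B)\subset K(\Gamma)$ with $[K(\Gamma):K(B)]=m_\Gamma<\infty$.

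\textbf{Step 1.} Inside $K(\Gamma)$, consider the compositum $L\cdot K(B)$. It satisfies $K(B)\subseteq L\cdot K(B)\subseteq K(\Gamma)$. Hence
$$
[L\cdot K(B):K(B)]\le m_\Gamma .
$$

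\textbf{Step 2.} We aim to show $[L\cdot K(B):K(B)]=[L:k]=d_\Gamma$, i.e.\ that $L$ and $K(B)$ are linearly disjoint over $k$. This is the key input. Since $K(B)/k$ is a regular extension by Lemma~\ref{lem:base-function-field-regular}, $K(B)$ is linearly disjoint over $k$ from every algebraic extension of $k$, in particular from the finite extension $L/k$ (Stacks Project, Tag 030O and neighbouring tags). Equivalently, $L\otimes_k K(B)$ is a field. We use this in its concrete form: the natural surjection $L\otimes_k K(B)\to L\cdot K(B)$ has domain a field, hence is an isomorphism. Therefore
$$
[L\cdot K(B):K(B)]=\dim_{K(B)}\bigl(L\otimes_kK(B)\bigr)=[L:k]=d_\Gamma .
$$

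\textbf{Step 3.} Combining Steps 1 and 2 gives $d_\Gamma\le m_\Gamma$.

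The only delicate point is Step 2. It needs the full regularity of $K(B)/k$, not merely that $k$ is algebraically closed in $K(B)$. Since $L/k$ may be purely inseparable, separability of $K(B)/k$ is exactly what guarantees that $L\otimes_kK(B)$ is reduced. Algebraic closedness of $k$ in $K(B)$ then ensures it is a domain, hence a field, as it is finite over $K(B)$. If one prefers not to quote linear disjointness directly, one can argue that $L\otimes_k K(B)$ is a localization of $L\otimes_k K(B)$-type rings appearing in $B_L:=B\times_k L$. That scheme is integral because $B$ is geometrically integral (Lemma~\ref{lem:geometric-integrality-output}), and its function field is $L\otimes_k K(B)$, which gives the same conclusion.
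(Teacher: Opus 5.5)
Your proof is correct and is the paper's argument: set $L=H^0(\Gamma,\mathcal O_\Gamma)$, use regularity of $K(B)/k$ (Lemma~\ref{lem:base-function-field-regular}) to get linear disjointness and hence $[L\cdot K(B):K(B)]=d_\Gamma$, then apply the tower law inside $K(\Gamma)$. The paper phrases the final step as the divisibility $d_\Gamma\mid m_\Gamma$, which your Step~1 yields as well.
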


\begin{proof}
Let $k_\Gamma:=H^0(\Gamma,\mathcal O_\Gamma)$. Then $k_\Gamma$ is a finite field extension of $k$ contained in $K(\Gamma)$. By Lemma~\ref{lem:base-function-field-regular}, the extension $K(B)/k$ is regular. Hence $K(B)$ and $k_\Gamma$ are linearly disjoint over $k$, and therefore
$
[K(B)k_\Gamma:K(B)]=[k_\Gamma:k]=d_\Gamma.
$
Since $K(B)k_\Gamma\subset K(\Gamma)$, the tower law gives
$m_\Gamma=[K(\Gamma):K(B)k_\Gamma]d_\Gamma.$ Thus, in fact, $d_\Gamma$ divides $m_\Gamma$, and in particular
$d_\Gamma\leq m_\Gamma$. 
\end{proof}
\begin{proposition}\label{prop:kz-lower-bound}
In the fibre type situation, one has
$
K_Z^2\ge -\frac{32}{5\varepsilon}.
$
\end{proposition}
\begin{proof}
Let $\Gamma$ be the non fibre extremal curve of
\cite[Lemma~5.5]{BernasconiMartin}, and set
$$
d_\Gamma:=[H^0(\Gamma,\mathcal O_\Gamma):k],
\qquad
m_\Gamma:=[K(\Gamma):K(B)].
$$
By the same lemma, there is an integer $n\geq0$ such that
$
K_Z\cdot\Gamma=d_\Gamma(n-2)
$
and
$
K_Z^2=
\frac{d_\Gamma}{m_\Gamma^2}
\bigl(8m_\Gamma+4n(1-m_\Gamma)\bigr).
$
Since $Z$ arises from the minimal resolution of the original
$\varepsilon$-klt surface by the $K_Y$-MMP,
\cite[Proof of Theorem~5.6, Claim~5.7]{BernasconiMartin} gives
$
n\leq\frac{2}{\varepsilon}.
$
Set
$
A_\Gamma:=8m_\Gamma+4n(1-m_\Gamma).
$
If $A_\Gamma\geq0$, then the displayed formula gives $K_Z^2\geq0$, and there
is nothing to prove.

Assume that $A_\Gamma<0$. Then $n>2$, since for $n\leq2$ one has
$
A_\Gamma=4n+m_\Gamma(8-4n)\geq0.
$
Hence $K_Z\cdot\Gamma=d_\Gamma(n-2)>0$. Here $Z$ is a regular surface of
del Pezzo type, $H^0(Z,\mathcal O_Z)=H^0(B,\mathcal O_B)=k$,
$\rho(Z)=2$, and $\Gamma$ generates the non fibre extremal ray. Together
with $K_Z\cdot\Gamma>0$, these are the hypotheses of the setup preceding
\cite[Proposition~4.7]{BernasconiTanaka}; hence $m_\Gamma\leq5$.
Corollary~\ref{cor:constant-field-degree-bound} gives $d_\Gamma\leq m_\Gamma$.
Thus, in the case $A_\Gamma<0$, we have in particular
$$
d_\Gamma\leq m_\Gamma\leq5,
$$
a refinement that will be used in Proposition~\ref{prop:char2-wild}.

Since $A_\Gamma<0$ and $d_\Gamma\leq m_\Gamma$,
$$
K_Z^2
=\frac{d_\Gamma}{m_\Gamma^2}A_\Gamma
\geq\frac{1}{m_\Gamma}A_\Gamma
=8-4n\frac{m_\Gamma-1}{m_\Gamma}.
$$
Using $m_\Gamma\leq5$ and $n\leq2/\varepsilon$, we obtain
$
K_Z^2
\geq8-\frac{32}{5\varepsilon}
\geq-\frac{32}{5\varepsilon}.
$
\end{proof}

We shall compare this lower bound with two different upper bounds for $K_Z^2$, according as the generic conic is smooth or regular but non smooth.

\section{Relative Anticanonical Geometry of Conic Bundles}\label{sec:relative-conics}

We first record the characteristic free intersection theoretic formulas for a conic bundle that will be used in both branches of the argument. We then treat the case of a smooth generic conic, where the discriminant gives the required upper bound for $K_{Z/B}^2$.

\subsection{Relative anticanonical geometry}

We first fix the standard relative anticanonical model and record the corresponding canonical bundle formula. Both statements are characteristic free and will also be used in the wild characteristic $2$ case.

\begin{proposition}\label{prop:relative-anticanonical-model}
Let $\pi:Z\to B$ be a flat contraction from a regular projective surface to a
regular projective curve, and assume every fibre is an irreducible reduced conic.
Set $E:=\pi_*\omega_{Z/B}^{-1}$.
Then $E$ is locally free of rank $3$, the relative anticanonical morphism gives a
closed immersion $Z\hookrightarrow \mathbb P_B(E)$,
and, writing $\rho:\mathbb P_B(E)\to B$ and letting $H$ denote the divisor class
of $\mathcal O_{\mathbb P_B(E)}(1)$, one has
$$
Z\sim 2H-\rho^*\det E,
\qquad
K_{Z/B}=-H|_Z.
$$
In particular,
$
K_{Z/B}^2= \deg E.
$
\end{proposition}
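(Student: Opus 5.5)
The plan is to use cohomology and base change on the fibres together with the fact that each fibre is a plane conic. For every point $b\in B$ (closed or generic), the fibre $Z_b$ is an irreducible reduced conic with $H^0(Z_b,\mathcal O_{Z_b})=\kappa(b)$. Since $\pi$ is flat and $Z$ is regular (hence Gorenstein), $\omega_{Z/B}$ is invertible and its formation commutes with base change, so $\omega_{Z/B}^{-1}|_{Z_b}\cong\omega_{Z_b}^{-1}$. A plane conic $C\subset\mathbb P^2_{\kappa}$ has $\omega_C\cong\mathcal O_C(-1)$ by adjunction. Thus $\omega_{Z_b}^{-1}\cong\mathcal O_{Z_b}(1)$ for the plane embedding, which exists by \cite[Proposition~2.18]{BernasconiTanaka}. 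From the sequence $0\to\mathcal O_{\mathbb P^2}(-1)\to\mathcal O_{\mathbb P^2}(1)\to\mathcal O_C(1)\to0$ we get $h^0(\omega_{Z_b}^{-1})=3$ and $h^1(\omega_{Z_b}^{-1})=0$, and $\omega_{Z_b}^{-1}$ is very ample, embedding $Z_b$ as the conic. Grauert's theorem over the reduced base $B$ then gives that $E=\pi_*\omega_{Z/B}^{-1}$ is locally free of rank $3$ and $E\otimes\kappa(b)\to H^0(Z_b,\omega_{Z_b}^{-1})$ is an isomorphism. Hence $\pi^*E\to\omega_{Z/B}^{-1}$ is surjective, which yields a $B$-morphism $\iota:Z\to\mathbb P_B(E)$ with $\iota^*\mathcal O(1)=\omega_{Z/B}^{-1}$. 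It restricts to a closed immersion on every fibre, so it is a closed immersion because $\pi$ is proper and flat. In particular $K_{Z/B}=-H|_Z$.

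Next we identify the class of $Z$. Since $Z$ is a relative effective Cartier divisor of relative degree $2$ in the $\mathbb P^2$-bundle, $Z\sim 2H+\rho^*L$ for some line bundle $L$ on $B$. To find $L$, compare via adjunction: $\omega_{\mathbb P(E)/B}=\mathcal O(-3)\otimes\rho^*\det E$, using the convention $\mathbb P(E)=\mathrm{Proj}\,\mathrm{Sym}\,E$ with $\rho_*\mathcal O(1)=E$; the sign of $\det E$ is the point to check carefully. Then $\omega_{Z/B}=(\omega_{\mathbb P/B}\otimes\mathcal O(Z))|_Z=\mathcal O(-1)\otimes\rho^*(\det E\otimes L)|_Z$. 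Since $\omega_{Z/B}=\mathcal O(-1)|_Z$, the bundle $\pi^*(\det E\otimes L)$ is trivial on $Z$. Pushing forward with $\pi_*\mathcal O_Z=\mathcal O_B$ and the projection formula gives $L\cong(\det E)^{-1}$, so $Z\sim 2H-\rho^*\det E$.

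Finally, $K_{Z/B}^2=(H|_Z)^2=H^2\cdot(2H-\rho^*\det E)=2H^3-H^2\cdot\rho^*\det E$ on $\mathbb P_B(E)$. The Grothendieck relation $H^3-\rho^*c_1(E)H^2=0$ gives $H^3=\deg E$, and $H^2\cdot\rho^*D=\deg D$. Hence $K_{Z/B}^2=2\deg E-\deg E=\deg E$. All degrees are taken over $k$, which is consistent because $H^0(B,\mathcal O_B)=k$.

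The step I expect to be the main obstacle is the base-change argument at the generic point and at non-smooth, possibly non-geometrically-reduced fibres. There one must make sure that adjunction $\omega_C=\mathcal O_C(-1)$ and the vanishing $h^1=0$ hold for every reduced irreducible plane conic over an imperfect residue field. This holds because these are purely Cartier divisor computations in $\mathbb P^2_\kappa$, so smoothness is never used.
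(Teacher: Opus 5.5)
Your proposal is correct and follows the paper's argument. The twist of $\mathcal O_P(Z)$ on $P=\mathbb P_B(E)$ is determined exactly as in the paper: compare adjunction, using $K_{P/B}=-3H+\rho^*\det E$ (the sign you flagged is confirmed by the relative Euler sequence $0\to\Omega^1_{P/B}\to\rho^*E\otimes\mathcal O_P(-1)\to\mathcal O_P\to0$), with $K_{Z/B}=-H|_Z$, and then use $\pi_*\mathcal O_Z=\mathcal O_B$; the identity $K_{Z/B}^2=\deg E$ then follows from the same Grothendieck relation $H^3=H^2\cdot\rho^*\det E$.

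The only difference is how you get local freeness of $E$ and the closed immersion. You prove them directly, via base change for $\omega_{Z/B}$, $h^0=3$ and $h^1=0$ on plane conics, Grauert, and the fibrewise closed-immersion criterion (for which properness alone suffices). The paper instead cites Tanaka's Proposition~2.7.
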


\begin{proof}
By \cite[Proposition~2.7]{TanakaDiscriminant}, $E$ is locally free of rank $3$ and $\omega_{Z/B}^{-1}$ is relatively very ample; the associated closed immersion $\iota:Z\hookrightarrow P:=\mathbb P_B(E)$
satisfies $\iota^*\mathcal O_P(1)\simeq \omega_{Z/B}^{-1}$. We use the quotient convention and write $H=c_1(\mathcal O_P(1))$. Since $P$ is regular and $Z\subset P$ has codimension one, $Z$ is an effective Cartier divisor. Since each fibre of $Z\to B$ is a plane conic, the projective bundle formula gives $Z\sim 2H+\rho^*M$ for some $M\in\operatorname{Pic}(B)$; see \cite[Tag 02TX]{StacksProject}.
We determine the base twist $M$ by comparing adjunction with the anticanonical embedding.

Since $\rho$ is a smooth $\mathbb P^2$-bundle, $\omega_{P/B}=\det\Omega^1_{P/B}$, and the relative Euler sequence gives
$$
K_{P/B}=-3H+\rho^*\det E.
$$
Adjunction therefore yields
$$
K_{Z/B}
=
\bigl(-H+\rho^*(\det E\otimes M)\bigr)|_Z.
$$
Here $\rho|_Z=\pi$, so the pullback term is $\pi^*(\det E\otimes M)$.
On the other hand,
$\mathcal O_Z(-K_{Z/B})\simeq \omega_{Z/B}^{-1}\simeq\mathcal O_P(1)|_Z$, so
$K_{Z/B}=-H|_Z$. Since $\pi_*\mathcal O_Z=\mathcal O_B$, pullback on
Picard groups is injective; hence $M\simeq(\det E)^{-1}$. Thus
$
Z\sim 2H-\rho^*\det E.
$

Finally, $H$ restricts to the hyperplane class on every fibre
$P_b\simeq\mathbb P^2_{\kappa(b)}$. Applying
\cite[Tag 02TW]{StacksProject} to $c_1(N)\cap[B]$ and taking degrees
therefore gives $H^2\cdot\rho^*N=\deg N$ for every line bundle $N$ on $B$.
Since $\dim B=1$, the terms involving $c_2(E)$ and $c_3(E)$ vanish in
the Chern relation
\cite[Tag 02U0]{StacksProject}.
Taking degrees and using $c_1(E)=c_1(\det E)$, we obtain
$H^3=H^2\cdot\rho^*\det E=\deg E$.
Together with $K_{Z/B}=-H|_Z$ and $Z\sim2H-\rho^*\det E$, this yields
$$
K_{Z/B}^2
=
H^2\cdot Z
=
2H^3-H^2\cdot\rho^*\det E
=
\deg E.\eqno\hbox{\qedhere}
$$
\end{proof}

\subsection{Smooth generic fibres}

Assume now that the generic conic is smooth. We apply Tanaka's discriminant construction to the relative anticanonical model; the resulting divisor determines the sign of $K_{Z/B}^2$ and hence bounds the genus of $B$.

\begin{lemma}\label{lem:discriminant-degree-curve}
Let $\pi:Z\to B$ be as in Proposition~\ref{prop:relative-anticanonical-model}, and
assume that the generic fibre is smooth over $K(B)$. Put
$E:=\pi_*\omega_{Z/B}^{-1}$. Then the discriminant scheme
$\Delta_\pi$ is an effective Cartier divisor on $B$ and
$
\mathcal O_B(\Delta_\pi)\simeq(\det E)^{-1}.
$
In particular,
$
\deg\Delta_\pi=-\deg E=-K_{Z/B}^2.
$
\end{lemma}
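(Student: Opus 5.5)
The plan is to describe the discriminant as the vanishing locus of the determinant of the symmetric bilinear form attached to the conic, and then to read off the line bundle of which this determinant is a section. By Proposition~\ref{prop:relative-anticanonical-model}, $Z\subset P=\mathbb P_B(E)$ is the zero scheme of a section
$$
s\in H^0\bigl(P,\mathcal O_P(2)\otimes\rho^*(\det E)^{-1}\bigr)=H^0\bigl(B,\operatorname{Sym}^2E\otimes(\det E)^{-1}\bigr).
$$
This exhibits $Z$ as a quadratic form on $E^\vee$ with values in the line bundle $L:=(\det E)^{-1}$. By \cite{TanakaDiscriminant}, the discriminant scheme $\Delta_\pi$ is defined locally as follows. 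Trivialise $E$ and $L$ over an open $U=\Spec R\subset B$, so that $Z|_U$ is cut out by a ternary quadratic form $Q=\sum_{i\le j}a_{ij}x_ix_j$ with $a_{ij}\in R$. Then $\Delta_\pi\cap U$ is cut out by the half-discriminant $\operatorname{disc}(Q)$. This is the characteristic-free polynomial that equals $4\det(a_{ij})$ divided by $4$, and in characteristic $2$ it is $a_{11}a_{23}^2+a_{22}a_{13}^2+a_{33}a_{12}^2+a_{12}a_{13}a_{23}$ plus the terms coming from the determinant. I would invoke Tanaka's definition and his basic compatibility results directly rather than re-derive them.

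Next I compute the transition behaviour. Under a change of frame $g\in GL_3(R)$ of $E$ and a rescaling $u\in R^\times$ of $L$, the form transforms as $Q\mapsto u\cdot(Q\circ g)$. For ternary forms the half-discriminant is homogeneous of degree $3$ in the coefficients and transforms by the square of the determinant, so $\operatorname{disc}(Q)\mapsto u^3(\det g)^2\operatorname{disc}(Q)$. The local functions $\operatorname{disc}(Q)$ therefore glue to a global section
$$
\delta\in H^0\bigl(B,L^{\otimes 3}\otimes(\det E)^{\otimes 2}\bigr)=H^0\bigl(B,(\det E)^{-3}\otimes(\det E)^{2}\bigr)=H^0\bigl(B,(\det E)^{-1}\bigr).
$$
I need to be careful with the dual convention. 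The quotient convention means that the coordinates $x_i$ are sections of $E$, so $\det g$ enters as the determinant acting on $E$. I expect the exponent bookkeeping to work out exactly to $(\det E)^{-1}$, and I would double-check it against the case of a trivial family. A check via a twist $E\otimes N$ is also available: the embedding is unchanged, $L$ becomes $L\otimes N^{-5}$ after twisting, and the degrees match.

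For effectivity, the generic fibre is a smooth conic over $K(B)$, so $\operatorname{disc}(Q)$ is nonzero at the generic point. Hence $\delta\neq0$, and since $B$ is a regular integral curve, its zero scheme is an effective Cartier divisor equal to $\Delta_\pi$. This gives $\mathcal O_B(\Delta_\pi)\simeq(\det E)^{-1}$. Taking degrees and applying Proposition~\ref{prop:relative-anticanonical-model} yields $\deg\Delta_\pi=-\deg E=-K_{Z/B}^2$.

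The main obstacle is matching the scheme structure of Tanaka's $\Delta_\pi$ with the zero scheme of the half-discriminant, especially in characteristic $2$, where the naive determinant vanishes identically and the correct invariant is the half-discriminant. I would first try to cite Tanaka's local description directly, namely that $\Delta_\pi$ is defined by the discriminant of the local conic equation. If that description is not stated in the needed form, I would verify it fibrewise: a fibre $Z_b$ fails to be smooth exactly when the half-discriminant vanishes at $b$, and both schemes are Cartier divisors defined by the same local equations up to units.
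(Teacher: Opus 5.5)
Your proposal is correct and follows essentially the same route as the paper: you regard the relative equation as a section of $\operatorname{Sym}^2E\otimes(\det E)^{-1}$, and you use that Tanaka's local discriminant is cubic in the coefficients and of determinant weight $2$, so the local equations glue to a nonzero section of $(\det E)^{2}\otimes(\det E)^{-3}=(\det E)^{-1}$ whose zero scheme is $\Delta_\pi$ by Tanaka's construction. Two harmless slips occur only in side remarks: in characteristic $2$ the half-discriminant is exactly $a_{11}a_{23}^2+a_{22}a_{13}^2+a_{33}a_{12}^2+a_{12}a_{13}a_{23}$ with no further terms, and in your optional twist check replacing $E$ by $E\otimes N$ changes $L$ to $L\otimes N^{-2}$ (not $L\otimes N^{-5}$), after which $(\det E)^{2}\otimes L^{3}$ is visibly unchanged.
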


\begin{proof}
By Proposition~\ref{prop:relative-anticanonical-model}, the relative equation is a section
$
q\in
H^0\bigl(B,\operatorname{Sym}^2E\otimes(\det E)^{-1}\bigr).
$
Tanaka's discriminant construction
\cite[Definition~3.4, Proposition~3.3, and Theorem~3.13]{TanakaDiscriminant}
associates to $q$ local principal discriminants which glue and whose vanishing
detects the non smooth fibres. The coordinate change calculation in the proof of \cite[Proposition~3.3]{TanakaDiscriminant} shows that the ternary discriminant has determinant weight $2$; it is cubic in the coefficients. Consequently these local discriminants define a global section
$$
\delta(q)\in
H^0\bigl(B,
(\det E)^2\otimes((\det E)^{-1})^3\bigr)
=
H^0\bigl(B,(\det E)^{-1}\bigr).
$$
The generic fibre is smooth, so $\delta(q)$ is nonzero. Since $B$ is a regular
integral curve, its zero scheme is an effective Cartier divisor, and by
Tanaka's construction it is precisely $\Delta_\pi$. Hence
$
\mathcal O_B(\Delta_\pi)\simeq(\det E)^{-1}.
$
Taking degrees and using
$K_{Z/B}^2=\deg E$ from
Proposition~\ref{prop:relative-anticanonical-model} gives the final identity.
\end{proof}

\begin{remark}\label{rem:discriminant-degree-tanaka}
When the base is smooth projective over an algebraically closed field, the
identity
$
\deg\Delta_\pi=-K_{Z/B}^2
$
also follows from Tanaka's Mori--Mukai formula; see
\cite[Remark~5.11 and Theorem~5.15]{TanakaDiscriminant}. The proof above
instead uses Tanaka's general discriminant construction, together with the
explicit transformation law for the relative anticanonical equation, and
therefore applies to the regular curve $B$, which need not be smooth over $k$.
\end{remark}

\begin{proposition}\label{prop:gensmooth-fibre-bound}
Retain the fibre type MMP setting of
Subsections~\ref{subsec:regular-mmp}--\ref{subsec:conic-output}. Assume that the
generic fibre of $\pi$ is smooth over $K(B)$. Then
$$
q(Z)\le 1+\frac{4}{5\varepsilon}.
$$
\end{proposition}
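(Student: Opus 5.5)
We combine three ingredients already established: the identity $q(Z)=h^1(B,\mathcal O_B)$ from Subsection~\ref{subsec:conic-output}, the lower bound of Proposition~\ref{prop:kz-lower-bound}, and the sign of $K_{Z/B}^2$ from Lemma~\ref{lem:discriminant-degree-curve}. Write $g:=h^1(B,\mathcal O_B)$. We will use the formula
$$
K_Z^2=K_{Z/B}^2+8(1-g),
$$
derived as follows. Since $H^0(B,\mathcal O_B)=k$ and $B$ is a regular projective curve, $\deg\omega_B=2g-2$ by Riemann--Roch over $k$. Moreover $K_Z=K_{Z/B}+\pi^*K_B$. Expanding gives
$$
K_Z^2=K_{Z/B}^2+2\,K_{Z/B}\cdot\pi^*K_B+(\pi^*K_B)^2 .
$$
The last term vanishes because two fibres are disjoint. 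For the middle term, $K_{Z/B}\cdot F=-2$ on a fibre $F$. This holds since $-K_{Z/B}=H|_Z$ restricts to a conic in $\mathbb P^2_{\kappa(b)}$, so its degree over $\kappa(b)$ is $2$; over a closed point this is weighted by $[\kappa(b):k]$, which is compatible with degrees on $B$. Hence $K_{Z/B}\cdot\pi^*K_B=-2(2g-2)$, and the formula follows.

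Next, by Lemma~\ref{lem:discriminant-degree-curve}, $K_{Z/B}^2=-\deg\Delta_\pi\le0$, since $\Delta_\pi$ is effective when the generic fibre is smooth. Therefore $K_Z^2\le 8(1-g)$. Combining this with Proposition~\ref{prop:kz-lower-bound} gives
$$
-\frac{32}{5\varepsilon}\le 8-8g,\qquad\text{so}\qquad g\le 1+\frac{4}{5\varepsilon},
$$
and $q(Z)=g$ finishes the proof.

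The main point to check carefully is the intersection computation for $K_{Z/B}\cdot\pi^*K_B$ when $B$ is only regular and not smooth. Three things need care. First, $\omega_B$ (the dualizing sheaf over $k$) must have degree $2g-2$; this uses $H^0(B,\mathcal O_B)=k$ and Serre duality on the Cohen--Macaulay curve $B$. Second, $\omega_{Z/B}$ must satisfy $\omega_Z\simeq\omega_{Z/B}\otimes\pi^*\omega_B$, which holds because $\pi$ is flat and Gorenstein (its fibres are plane conics). Third, degrees of divisors pulled back from closed points must be computed with the residue-field weights. An alternative for the middle term is to use Proposition~\ref{prop:relative-anticanonical-model} directly: $H\cdot\rho^*N\cdot Z=2\deg N$, since $Z\sim 2H-\rho^*\det E$ and $(\rho^*N)^2=0$. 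This gives $K_{Z/B}\cdot\pi^*K_B=-2\deg K_B$ without any fibre-by-fibre argument.
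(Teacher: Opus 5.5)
Your proposal is correct and follows the paper's proof essentially step for step. It uses $q(Z)=g$, the identity $K_Z^2=K_{Z/B}^2+8(1-g)$ obtained from $K_Z\sim K_{Z/B}+\pi^*K_B$ and $K_{Z/B}\cdot\pi^*D=-2\deg D$, the sign $K_{Z/B}^2\le 0$ from the discriminant lemma, and the lower bound $K_Z^2\ge-\frac{32}{5\varepsilon}$. Your added checks only make explicit what the paper leaves implicit: $(\pi^*K_B)^2=0$, the residue-field weights at closed fibres, and the alternative computation $H\cdot\rho^*N\cdot Z=2\deg N$ via $Z\sim 2H-\rho^*\det E$.
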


\begin{proof}
Set $g:=h^1(B,\mathcal O_B).$ By Subsection~\ref{subsec:conic-output}, we have
$q(Z)=g.$ By Lemma~\ref{lem:discriminant-degree-curve}, there is an effective Cartier divisor $\Delta_\pi$ on $B$ such that
$\deg\Delta_\pi=-K_{Z/B}^2$, and hence $K_{Z/B}^2\le 0$. By \cite[Lemma 4.26]{LiuAGAC}, we get that $K_Z\sim K_{Z/B}+\pi^*K_B$. Taking the self intersection we get that $$
K_Z^2=K_{Z/B}^2+2K_{Z/B}\cdot\pi^*K_B
=K_{Z/B}^2+8(1-g) 
$$ where the last term comes from the fact that for every Cartier divisor $D$ on $B$, 
$K_{Z/B}\cdot\pi^*D=-2\deg D$ since fibres are conic and $\deg K_B=2g-2$ .
Therefore $K_Z^2\le 8(1-g)$.
Combining this with Proposition~\ref{prop:kz-lower-bound}, we get
$$
-\frac{32}{5\varepsilon}\le K_Z^2\le 8(1-g).
$$
Hence $g\le 1+\frac{4}{5\varepsilon}$.
Since $q(Z)=g$, the proposition follows.
\end{proof}

This completes the generically smooth branch. In characteristic $2$, however, a regular generic conic may be non smooth and its discriminant then vanishes identically. Sections~\ref{sec:differentials} and~\ref{sec:wild-conics} provide the replacement for the discriminant argument.

\section{Differentials on Regular Curves over Imperfect Fields}\label{sec:differentials}

Let $B$ be a regular projective curve over a field $k$ of characteristic $p>0$. 
Since $B$ need not be smooth over $k$, the relative differentials $\Omega^1_{B/k}$ may have torsion, while the constant part of the absolute differentials need not be saturated.
Regular non smooth curves over imperfect fields and their Frobenius behaviour have been studied systematically in recent work; see, for example, \cite{HilarioStoehr}. We first compare the two differential defects and then derive a degree estimate that is independent of the characteristic and the rank.

\subsection{Absolute and relative differentials}

Regularity of $K(B)/k$ ensures that constant differentials remain independent after extension to $K(B)$ and hence define a torsion free subsheaf on $B$. We quantify the failure of $\Omega^1_{B/k}$ to be a line bundle of degree $2g-2$. Locally, the fundamental class morphism identifies the torsion length of $\Omega^1_{B/k}$ with the colength of its torsion free quotient in $\omega_{B/k}$. The same torsion also controls the failure of the constant differentials to be saturated in the absolute differentials.

\begin{lemma}\label{lem:local-cotangent-defect}
Let $B$ be a regular projective integral curve over a field $k$, and assume
that $K(B)/k$ is separably generated. Fix a closed immersion $i:B\hookrightarrow\mathbb P^N_k$,
and let $\lambda:\Omega^1_{B/k}\longrightarrow\omega_{B/k}$
be the fundamental class morphism associated with the factorisation $B\hookrightarrow\mathbb P^N_k\longrightarrow\Spec k$.
Let $b\in B$ be a closed point, set $A:=\mathcal O_{B,b}$.
Then $\lambda_b$ annihilates $\left(\Omega^1_{A/k}\right)_{\mathrm{tor}}$ and induces an injection $$\overline\lambda_b:\Omega^1_{A/k}/\left(\Omega^1_{A/k}\right)_{\mathrm{tor}}\hookrightarrow\omega_{B/k,b}$$
such that
$
\operatorname{length}_A\operatorname{coker}(\overline\lambda_b)
=
\operatorname{length}_A \left(\Omega^1_{A/k}\right)_{\mathrm{tor}}.
$
\end{lemma}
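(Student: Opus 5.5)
We prove the lemma locally at $b$, by writing $A$ as a quotient of the local ring of $\mathbb P^N_k$ and reading both $\Omega^1_{A/k}$ and $\omega_{B/k,b}$ off the same conormal presentation.

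\emph{Setup.} Put $R:=\mathcal O_{\mathbb P^N_k,b}$, a regular local ring of dimension $N$. Write $A=R/I$. Since $A$ is regular of dimension one, $I$ is generated by a regular sequence $f_1,\dots,f_{N-1}$, so $B\hookrightarrow\mathbb P^N_k$ is a local complete intersection near $b$. The conormal sequence reads
$$
I/I^2\xrightarrow{\ d\ }\Omega^1_{R/k}\otimes_R A\longrightarrow\Omega^1_{A/k}\longrightarrow0 .
$$
Here $I/I^2\simeq A^{N-1}$ and $\Omega^1_{R/k}\otimes A\simeq A^N$ are free, so $\Omega^1_{A/k}$ has the presentation $A^{N-1}\xrightarrow{M}A^N\to\Omega^1_{A/k}\to0$. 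We have $\omega_{B/k,b}=\det(I/I^2)^\vee\otimes\det(\Omega^1_{R/k}\otimes A)$, and the fundamental class morphism $\lambda_b$ sends $\eta$ to $(f_1^\vee\wedge\dots\wedge f_{N-1}^\vee)\otimes(df_1\wedge\dots\wedge df_{N-1}\wedge\tilde\eta)$, where $\tilde\eta$ is any lift of $\eta$. We would check that this agrees with the paper's convention for $\lambda$, at least up to a unit; this is the only point needing care with conventions.

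\emph{Injectivity and torsion.} Since $K(B)/k$ is separably generated of transcendence degree $1$, the module $\Omega^1_{K(B)/k}$ is one-dimensional. Hence $M$ has rank $N-1$ generically, so $d:I/I^2\to\Omega^1_{R/k}\otimes A$ is injective, because its source is torsion free. The map $\lambda_b$ is $A$-linear with values in the torsion free rank-one module $\omega_{B/k,b}\simeq A$. It therefore kills $(\Omega^1_{A/k})_{\mathrm{tor}}$. The map $\lambda_b\otimes K(B)$ is nonzero, since it is wedging with a nonzero $(N-1)$-vector in an $N$-dimensional space, so it is an isomorphism of one-dimensional spaces. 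Thus $\overline\lambda_b$ is a nonzero map between rank-one torsion free modules over a DVR, and is therefore injective.

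\emph{Length comparison.} This is the key step. Use Smith normal form over the DVR $A$: choose bases so that $M=\operatorname{diag}(t^{a_1},\dots,t^{a_{N-1}})$ padded by a zero row, where $t$ is a uniformiser. Then
$$
\Omega^1_{A/k}\simeq A\oplus\bigoplus_i A/t^{a_i},
$$
so $\operatorname{length}(\Omega^1_{A/k})_{\mathrm{tor}}=\sum a_i$. In these bases, $\lambda_b$ sends the free generator $e_N$ to $\det(M\mid e_N)=t^{\sum a_i}$ times a generator of $\omega_{B/k,b}$. Change of basis only changes the determinants by units. Hence $\operatorname{coker}\overline\lambda_b\simeq A/t^{\sum a_i}$, which has length $\sum a_i$, as required. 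Equivalently, both sides equal the length of $A$ modulo the ideal of maximal minors of $M$ (the first Fitting ideal), and one could phrase the argument that way.

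The main obstacle is identifying the abstract fundamental class morphism with the explicit wedge formula above. We would cite its standard local description for lci morphisms. Independence of the embedding is not needed, since $i$ is fixed.
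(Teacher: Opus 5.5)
Your proposal is correct and follows essentially the same route as the paper: a conormal presentation $A^{N-1}\to A^{N}\to\Omega^1_{A/k}\to 0$, injectivity of the presentation map from the generic rank, Smith normal form over the DVR $A$, and the determinant description of the fundamental class, which sends the free generator to $\varpi^{\sum a_i}$ times a generator of $\omega_{B/k,b}$. The differences are cosmetic: you get the regular immersion directly from $A$ being a regular quotient of $\mathcal O_{\mathbb P^N_k,b}$ rather than citing the Stacks Project, and you note the equivalent first-Fitting-ideal formulation.
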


\begin{proof}
By \cite[Tag 0E9K]{StacksProject}, the structure morphism $B\to\Spec k$ is a local complete intersection. For the fixed factorisation through $\mathbb P^N_k$, \cite[Tag 0E9J]{StacksProject} shows that the immersion $i$ is regular. Localising the conormal sequence at $b$ therefore gives a presentation
$$
A^r\xrightarrow{\Phi}A^{r+1}
\longrightarrow\Omega^1_{A/k}\longrightarrow0.
$$
After tensoring with $K:=K(B)$, the cokernel is the one dimensional
$K$ vector space $\Omega^1_{K/k}$, so $\Phi\otimes_AK$ has rank $r$.
Since $A$ is a DVR, $\Phi$ is injective. Choose a uniformiser $\varpi$ of $A$. By Smith normal form, there are bases
$e_1,\ldots,e_r$ and $f_1,\ldots,f_{r+1}$ such that
$
\Phi(e_i)=\varpi^{a_i}f_i,
\text{ for } a_i\geq0.
$
Hence
$$
\Omega^1_{A/k}
\simeq
Af_{r+1}\oplus\bigoplus_{i=1}^r A/(\varpi^{a_i})f_i,
$$
and therefore
$
\Omega^1_{A/k}/\left(\Omega^1_{A/k}\right)_{\mathrm{tor}}\simeq Af_{r+1},
\text{ and }
\operatorname{length}_A \left(\Omega^1_{A/k}\right)_{\mathrm{tor}}=\sum_{i=1}^r a_i.
$

By the determinant description of the fundamental class morphism
\cite[Tag 0E9Y,Tag 0E9Z]{StacksProject}, in these bases $\lambda_b$
annihilates the torsion summands and sends $f_{r+1}$ to
$\varpi^{a_1+\cdots+a_r}$ times a generator of $\omega_{B/k,b}$. Thus
$\overline\lambda_b$ is multiplication by $\varpi^{a_1+\cdots+a_r}$ between
free rank one $A$-modules. It is injective, and
$$
\operatorname{length}_A\operatorname{coker}(\overline\lambda_b)
=
\sum_{i=1}^r a_i
=
\operatorname{length}_A (\left(\Omega^1_{A/k}\right)_{\mathrm{tor}}).
$$
\end{proof}

\begin{proposition}\label{prop:global-differential-defect}
Let $B$ be a regular projective integral curve over a field $k$ of characteristic $p>0$, and assume that $K(B)/k$ is regular.
Then the fundamental class morphism induces an exact sequence
$$
0\longrightarrow \left(\Omega^1_{B/k}\right)_{\mathrm{tf}}
\longrightarrow\omega_{B/k}
\longrightarrow\mathcal D_{B/k}
\longrightarrow0,
$$
where $\mathcal D_{B/k}$ is a coherent torsion sheaf satisfying $\deg\mathcal D_{B/k}=\deg \left(\Omega^1_{B/k}\right)_{\mathrm{tor}}$.

Consequently, $\deg \left(\Omega^1_{B/k}\right)_{\mathrm{tf}}=2g-2-\deg \left(\Omega^1_{B/k}\right)_{\mathrm{tor}}$.
Let $C\subset \left(\Omega^1_{B/\mathbb F_p}\right)_{\mathrm{tf}}$ be the image of $$\Omega^1_{k/\mathbb F_p}\otimes_k\mathcal O_B\rightarrow \left(\Omega^1_{B/\mathbb F_p}\right)_{\mathrm{tf}},
\text{ and put } S:=\ker\left(\left(\Omega^1_{B/\mathbb F_p}\right)_{\mathrm{tf}}\rightarrow \left(\Omega^1_{B/k}\right)_{\mathrm{tf}}\right).$$
Then $S/C$ is a coherent torsion sheaf and
$
\deg(S/C)\leq \deg \left(\Omega^1_{B/k}\right)_{\mathrm{tor}}.$
\end{proposition}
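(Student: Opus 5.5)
The plan has three parts: glue the local statement of Lemma~\ref{lem:local-cotangent-defect} into the exact sequence, take degrees, and then compare the absolute and relative cotangent sequences to control $S/C$.

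\textbf{Step 1: the exact sequence.} Since $K(B)/k$ is regular, it is separably generated, so Lemma~\ref{lem:local-cotangent-defect} applies at every closed point $b$. The fundamental class morphism $\lambda:\Omega^1_{B/k}\to\omega_{B/k}$ is a global map of coherent sheaves. Its stalks kill the torsion, so $\lambda$ kills $(\Omega^1_{B/k})_{\mathrm{tor}}$ and factors through $\overline\lambda:(\Omega^1_{B/k})_{\mathrm{tf}}\to\omega_{B/k}$. The map $\overline\lambda$ is injective stalkwise, hence injective. Define $\mathcal D_{B/k}:=\operatorname{coker}\overline\lambda$. At the generic point $\Omega^1_{K/k}$ is one dimensional, so $\overline\lambda$ is an isomorphism there and $\mathcal D_{B/k}$ is torsion. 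Summing the local length equalities $\operatorname{length}\,\operatorname{coker}(\overline\lambda_b)=\operatorname{length}(\Omega^1_{A/k})_{\mathrm{tor}}$, each weighted by $[\kappa(b):k]$, gives $\deg\mathcal D_{B/k}=\deg(\Omega^1_{B/k})_{\mathrm{tor}}$. Since $\omega_{B/k}$ is a line bundle of degree $2g-2$ (here $H^0(B,\mathcal O_B)=k$ by regularity of $K(B)/k$, and Serre duality applies), the degree formula for $(\Omega^1_{B/k})_{\mathrm{tf}}$ follows.

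\textbf{Step 2: the generic stalk.} Begin with the first fundamental exact sequence
$$\Omega^1_{k/\mathbb F_p}\otimes_k\mathcal O_B\to\Omega^1_{B/\mathbb F_p}\to\Omega^1_{B/k}\to0.$$
Regularity of $K/k$ makes this sequence left exact at the generic point, and $\Omega^1_{K/\mathbb F_p}$ is free of finite rank because $K$ is finitely generated over $\mathbb F_p$ in the relevant sense. Then:
\begin{itemize}
\item the generic stalk of $C$ is $\Omega^1_{k}\otimes_kK$;
\item the generic stalk of $S$ is the kernel of $\Omega^1_{K/\mathbb F_p}\to\Omega^1_{K/k}$, which is also $\Omega^1_k\otimes_k K$.
\end{itemize}
So $C_\eta=S_\eta$, and $S/C$ is torsion. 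It is coherent because $C$ is the image of a quasi-coherent sheaf inside a coherent one, hence coherent on a noetherian curve.

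\textbf{Step 3: the local degree estimate.} Work at a closed point $b$ with $A=\mathcal O_{B,b}$, and write $T=(\Omega^1_{A/k})_{\mathrm{tor}}$. Consider an element $s\in S_b\subset(\Omega^1_{A/\mathbb F_p})_{\mathrm{tf}}$. Lift it to $\tilde s\in\Omega^1_{A/\mathbb F_p}$. The image of $\tilde s$ in $\Omega^1_{A/k}$ maps to zero in $(\Omega^1_{A/k})_{\mathrm{tf}}$, so it lies in $T$.

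This defines a map $S_b\to T$. Its well-definedness needs care: a different lift changes $\tilde s$ by an element of $(\Omega^1_{A/\mathbb F_p})_{\mathrm{tor}}$, whose image in $\Omega^1_{A/k}$ is torsion. So the natural target is $T$ modulo the image of the absolute torsion.

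If the image of $\tilde s$ in $\Omega^1_{A/k}$ is zero, then exactness of the first fundamental sequence shows $\tilde s$ comes from $\Omega^1_k\otimes A$, so $s\in C_b$. Hence $S_b/C_b$ injects into $T/\operatorname{im}((\Omega^1_{A/\mathbb F_p})_{\mathrm{tor}})$, a quotient of $T$. This gives $\operatorname{length}(S/C)_b\le\operatorname{length}T$. Multiplying by $[\kappa(b):k]$ and summing over $b$ proves the inequality $\deg(S/C)\le\deg(\Omega^1_{B/k})_{\mathrm{tor}}$.

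\textbf{Main obstacle.} I expect the main difficulty to be the finiteness and coherence of $\Omega^1_{B/\mathbb F_p}$, since $\Omega^1_{k/\mathbb F_p}$ may be infinite dimensional when $[k:k^p]=\infty$. To handle this, I would work only with the image $C$ and the torsion-free quotient, and note that every module in the Step 3 argument is a submodule or quotient of the finitely generated $T$. Only the element-chasing then needs exactness, and exactness holds with no finiteness assumption.
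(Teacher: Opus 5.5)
Your proposal is correct and follows essentially the paper's route: Step~1 is the same gluing of Lemma~\ref{lem:local-cotangent-defect} together with Riemann--Roch, and your element chase in Step~3 is precisely the snake lemma the paper applies to the map from the cotangent sequence to $0\to S\to(\Omega^1_{B/\mathbb F_p})_{\mathrm{tf}}\to(\Omega^1_{B/k})_{\mathrm{tf}}\to0$; your injection $S/C\hookrightarrow(\Omega^1_{B/k})_{\mathrm{tor}}/\operatorname{im}\bigl((\Omega^1_{B/\mathbb F_p})_{\mathrm{tor}}\bigr)$ is the inverse of the paper's connecting surjection $(\Omega^1_{B/k})_{\mathrm{tor}}\twoheadrightarrow S/C$. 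The one slip is the coherence argument in Step~2: when $[k:k^p]=\infty$, $\Omega^1_{K/\mathbb F_p}$ is infinite dimensional and $(\Omega^1_{B/\mathbb F_p})_{\mathrm{tf}}$ is not coherent, so you should instead deduce the coherence of $S/C$ from that global comparison map with the coherent sheaf $(\Omega^1_{B/k})_{\mathrm{tor}}$, as your final paragraph anticipates.
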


\begin{proof}
Since $K(B)/k$ is regular, it is separably generated, so $\Omega^1_{B/k}$
has generic rank one. Thus $\left(\Omega^1_{B/k}\right)_{\mathrm{tf}}$ is a line bundle. Fix a projective embedding
of $B$ and the associated fundamental class morphism
$\lambda:\Omega^1_{B/k}\longrightarrow\omega_{B/k}$. Since $\omega_{B/k}$
is torsion free, $\lambda$ factors through $\left(\Omega^1_{B/k}\right)_{\mathrm{tf}}$, and
Lemma~\ref{lem:local-cotangent-defect} gives an injection
$
\left(\Omega^1_{B/k}\right)_{\mathrm{tf}}\hookrightarrow\omega_{B/k}
$
whose cokernel $\mathcal D_{B/k}$ is a coherent torsion sheaf with
$\deg\mathcal D_{B/k}=\deg \left(\Omega^1_{B/k}\right)_{\mathrm{tor}}$.

Because $B$ is proper and integral, $H^0(B,\mathcal O_B)$ is a finite
extension of $k$ contained in $K(B)$. Regularity of $K(B)/k$ therefore gives
$H^0(B,\mathcal O_B)=k$, so $\chi(\mathcal O_B)=1-g$. Since $B$ is regular,
it is Gorenstein, and Riemann--Roch \cite[Tag 0BS6]{StacksProject} gives $\deg\omega_{B/k}=-2\chi(\mathcal O_B)=2g-2.$
Taking degrees yields $\deg \left(\Omega^1_{B/k}\right)_{\mathrm{tf}}=2g-2-\deg \left(\Omega^1_{B/k}\right)_{\mathrm{tor}}$.

Now we show $
\deg(S/C)\leq \deg \left(\Omega^1_{B/k}\right)_{\mathrm{tor}}.$ Consider the commutative diagram: 

\[
\begin{tikzcd}
0
  \arrow[r]
&
\Omega^1_{K/\mathbb{F}_p}\otimes \mathcal{O}_B
  \arrow[r, "I"]
  \arrow[d, "u"]
&
\Omega^1_{B/\mathbb{F}_p}
  \arrow[r, "Q"]
  \arrow[d, "v"]
&
\Omega^1_{B/k}
  \arrow[r]
  \arrow[d, "w"]
&
0
\\
0
  \arrow[r]
&
\ker q
  \arrow[r, "i"]
&
\bigl(\Omega^1_{B/\mathbb{F}_p}\bigr)_{\mathrm{tf}}
  \arrow[r, "q"]
&
\bigl(\Omega^1_{B/k}\bigr)_{\mathrm{tf}}
  \arrow[r]
&
0
\end{tikzcd}
\]
Here, the first sequence is the cotangent sequence. The injectivity of $I$ follows from the fact that $k(B)/k$ is regular, making $I$ generically injective. 
The kernel vanishes generically and is a torsion subsheaf of the torsion-free sheaf $\Omega^1_{k/\mathbb F_p}\otimes_k\mathcal O_B$  so is zero. The maps $v$ and $w$ are natural maps to the quotient sheaf, and $u=v \circ I$, indeed, note that $q \circ v \circ I=w \circ Q \circ I=0$. This implies $\im v \circ I \subset \ker q.$ So, $u$ is the inclusion map. Now we apply the Snake Lemma to the diagram to get the exact sequence: 
$$0 \rightarrow \ker u \rightarrow \ker v \rightarrow \ker w\rightarrow \Coker  u \rightarrow \Coker v \rightarrow \Coker w \rightarrow 0$$  Note that $\ker v = \bigl(\Omega^1_{B/\mathbb{F}_p}\bigr)_{\mathrm{tor}}$, $\ker w= \bigl(\Omega^1_{B/k}\bigr)_{\mathrm{tor}}$, $\Coker v =\Coker w=0$ since $v$ and $w$ are surjective maps and $\Coker u= \ker q/ \im (u= v \circ I)=S/C$. So we obtain the following sequence: 
$$\bigl(\Omega^1_{B/\mathbb{F}_p}\bigr)_{\mathrm{tor}} \rightarrow \bigl(\Omega^1_{B/k}\bigr)_{\mathrm{tor}} \rightarrow S/C \rightarrow 0$$

Thus $S/C$ is a quotient of the coherent torsion sheaf $\left(\Omega^1_{B/k}\right)_{\mathrm{tor}}$, so it is coherent
and $\deg(S/C)\leq \deg \left(\Omega^1_{B/k}\right)_{\mathrm{tor}}$.
\end{proof}

\begin{example*}
The two defects above occur simultaneously even for a regular projective geometrically integral curve. Let
$k=\mathbb F_2(b)$ and
$$
B:=V\bigl(Y^2Z+bZ^3+X^3\bigr)\subset\mathbb P^2_k.
$$
After base change to $\overline{k}$ and replacing $Y$ by $Y+\sqrt b\,Z$, the equation becomes
$Y^2Z+X^3=0$, so $B$ is geometrically integral. The point at infinity is smooth, while on the chart $Z=1$ the unique non smooth point is the closed point $\mathfrak p=(x)$, with residue field $k(\sqrt b)$. Its local ring
$A:=\mathcal O_{B,\mathfrak p}$ is a DVR with uniformiser $x$, so $B$ is regular.

Relatively over $k$, the equation gives $x^2dx=0$, and hence
$
\Omega^1_{A/k}\simeq A\,dy\oplus A/(x^2)\,dx.
$
Thus $$\left(\left(\Omega^1_{B/k}\right)_{\mathrm{tor}}\right)_{\mathfrak p}\simeq A/(x^2)\,dx \text{ and } \left(\left(\Omega^1_{B/k}\right)_{\mathrm{tf}}\right)_{\mathfrak p}\simeq A\,dy.$$ Absolutely over $\mathbb F_2$ one has
$db=x^2dx$, so
$$
(\left(\Omega^1_{B/\mathbb{F}_p}\right)_{\mathrm{tf}})_{\mathfrak p}\simeq A\,dx\oplus A\,dy,
\qquad
C_{\mathfrak p}=A\,x^2dx,
\qquad
S_{\mathfrak p}=A\,dx,
$$
and therefore $(S/C)_{\mathfrak p}\simeq A/(x^2).$ The fundamental class map sends $dy$, up to a unit, to $x^2$ times a local generator of $\omega_{B/k}$, so its cokernel is again $A/(x^2)$. Thus the torsion in the relative differentials, the colength in the dualizing line, and the saturation defect all have the same local length. In this example $g=1$ and $[\kappa(\mathfrak p):k]=2$, hence $\deg \left(\Omega^1_{B/k}\right)_{\mathrm{tor}}=4$, $\deg(S/C)=4$, and $\deg \left(\Omega^1_{B/k}\right)_{\mathrm{tf}}=-4=2g-2-\deg \left(\Omega^1_{B/k}\right)_{\mathrm{tor}}$.
\end{example*}

\subsection{A degree estimate}

The preceding comparison is useful beyond the rank two situation arising from a conic. The point is that all but one generic direction of a vector bundle mapping to the absolute differentials are controlled by the constant part, while the remaining direction lies in the relative differential line. The same defect $\deg \left(\Omega^1_{B/k}\right)_{\mathrm{tor}}$ occurs with opposite signs in the two contributions.

\begin{proposition}\label{prop:differential-defect}
Keep the notation and assumptions of Proposition~\ref{prop:global-differential-defect}. Let $\mathcal V$ be a vector bundle of finite rank equipped with an injection 
$
\mathcal V\hookrightarrow \left(\Omega^1_{B/\mathbb F_p}\right)_{\mathrm{tf}}
$, that the composite
$
\mathcal V\hookrightarrow \left(\Omega^1_{B/\mathbb F_p}\right)_{\mathrm{tf}}\longrightarrow
\left(\Omega^1_{B/k}\right)_{\mathrm{tf}}
$
has generic rank one, we have,
$$
\deg\mathcal V\leq2g-2.
$$
\end{proposition}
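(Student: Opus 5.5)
Set $\Omega:=\left(\Omega^1_{B/\mathbb F_p}\right)_{\mathrm{tf}}$, $L:=\left(\Omega^1_{B/k}\right)_{\mathrm{tf}}$, and $q:\Omega\to L$ the natural surjection, whose kernel is $S$. We split $\mathcal V$ along $q$. Let $\mathcal V':=\mathcal V\cap S=\ker(q|_{\mathcal V})$, and let $\mathcal V'':=\mathcal V/\mathcal V'$, which is the image of $\mathcal V$ in $L$. Since $\mathcal V'$ is a kernel of a map between torsion free sheaves on a regular curve, it is saturated in $\mathcal V$. Hence $\mathcal V'$ and $\mathcal V''$ are vector bundles, and by the generic rank one hypothesis $\mathcal V''$ is a line bundle. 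Then $\deg\mathcal V=\deg\mathcal V'+\deg\mathcal V''$, and we bound the two terms separately. The defect $\delta:=\deg\left(\Omega^1_{B/k}\right)_{\mathrm{tor}}$ should appear with opposite signs in the two bounds.

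\textbf{Quotient part.} $\mathcal V''$ is a nonzero subsheaf of the line bundle $L$, so $\deg\mathcal V''\le\deg L$. By Proposition~\ref{prop:global-differential-defect}, $\deg L=2g-2-\delta$.

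\textbf{Kernel part.} $\mathcal V'\subset S$, and Proposition~\ref{prop:global-differential-defect} gives the sequence $0\to C\to S\to S/C\to0$ with $S/C$ torsion of degree at most $\delta$. Put $\mathcal V'_C:=\mathcal V'\cap C$. Then $\mathcal V'/\mathcal V'_C$ injects into $S/C$, so it is torsion of degree at most $\delta$, and $\deg\mathcal V'\le\deg\mathcal V'_C+\delta$. Now $C$ is the image of $\Omega^1_{k/\mathbb F_p}\otimes_k\mathcal O_B$, and by the injectivity of $I$ shown in the proof of Proposition~\ref{prop:global-differential-defect}, $C\cong\Omega^1_{k/\mathbb F_p}\otimes_k\mathcal O_B$, a direct sum of copies of $\mathcal O_B$. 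So $\mathcal V'_C$ is a finite rank subsheaf of a trivial bundle.

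\textbf{Main obstacle.} This is the claim that a finite rank subsheaf $W$ of $\mathcal O_B^{\oplus J}$ (with $J$ possibly infinite when $[k:k^p]=\infty$) has $\deg W\le0$. First, $W$ lies in a finite sum $\mathcal O_B^{\oplus J_0}$, since it is coherent and $B$ is quasi-compact. Then pass to $\det W\hookrightarrow\wedge^r\mathcal O_B^{\oplus J_0}$, which is again trivial, and choose a coordinate projection that is nonzero on $\det W$. This gives a nonzero map $\det W\to\mathcal O_B$, so $\deg\det W\le0$, using that a line bundle with a nonzero section has nonnegative degree.

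Combining the three bounds,
$$\deg\mathcal V\le(0+\delta)+(2g-2-\delta)=2g-2.$$
The cancellation of $\delta$ is the point of the estimate. Care is needed when $\mathcal V'=0$ or $J$ is infinite, but the argument goes through in those cases as well.
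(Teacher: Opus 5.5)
Your proposal is correct and follows essentially the same route as the paper. You split $\mathcal V$ into its image in $\left(\Omega^1_{B/k}\right)_{\mathrm{tf}}$, of degree at most $2g-2-\delta$, and its kernel; intersecting the kernel with $C$ makes the quotient inject into $S/C$ (contributing at most $\delta$), and you bound the part inside $C\cong\Omega^1_{k/\mathbb F_p}\otimes_k\mathcal O_B$ by $0$ via a finite-dimensional reduction and a nonzero coordinate of the determinant map — exactly as the paper does, with your saturation remark only making explicit that both pieces are vector bundles.
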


\begin{proof}
Consider the composition map $\Phi: \mathcal V\longrightarrow \left(\Omega^1_{B/\mathbb F_p}\right)_{\mathrm{tf}}\longrightarrow \left(\Omega^1_{B/k}\right)_{\mathrm{tf}}$, we have 
$$
\deg\mathcal V
=\deg \im \Phi+\deg \ker \Phi$$
Since, $\im \Phi \subseteq \left(\Omega^1_{B/k}\right)_{\mathrm{tf}}$ we get that $\deg \im \Phi \leq \deg \left(\Omega^1_{B/k}\right)_{\mathrm{tf}}=2g-2-\deg \left(\Omega^1_{B/k}\right)_{\mathrm{tor}}$ by previous lemma.
Also note that $\ker \Phi \subseteq \ker\left(\left(\Omega^1_{B/\mathbb F_p}\right)_{\mathrm{tf}}\rightarrow \left(\Omega^1_{B/k}\right)_{\mathrm{tf}}\right)=S$.
At this point, we would like to write $\deg \ker \Phi \leq \deg S$, but the two vector bundles have different ranks, so we cannot compare their degrees; we do a few extra steps to reach the conclusion. Since $\ker \Phi \subseteq S$, we extract the part of $\ker \Phi$ contained in $C$ where $C$ is the 
the image of $\Omega^1_{k/\mathbb F_p}\otimes_k\mathcal O_B\rightarrow \left(\Omega^1_{B/\mathbb F_p}\right)_{\mathrm{tf}}$ as in the previous lemma. We write $C_{\Phi}$ to be $(\ker \Phi) \cap C$. Then $\ker \Phi /C_{\Phi} \hookrightarrow S/C.$ Since $C_{\Phi}$ is coherent, the inclusion $C_{\Phi} \hookrightarrow C$ factors through $C_{\Phi} \hookrightarrow U \otimes_k \mathcal{O}_B$ where $U$ is finite dimesional subspace of $\Omega^1_{k/\mathbb{F}_p}.$ The target space comes from the fact that $C \simeq \Omega^1_{k/\mathbb{F}_p} \otimes_k \mathcal{O}_B$ indeed, by \cite[Tag 0322]{StacksProject}, the natural map
$$
K\otimes_k\Omega^1_{k/\mathbb F_p}
\longrightarrow
\Omega^1_{K/\mathbb F_p}
$$
is injective for any finitely generated regular field extension of characteristic $p>0$, $K/k.$ Consequently, the induced morphism $W\otimes_k\mathcal O_B\longrightarrow \left(\Omega^1_{B/\mathbb F_p}\right)_{\mathrm{tf}}$
is injective for every finite-dimensional $ k$-subspace
$W\subset\Omega^1_{k/\mathbb F_p}$. Indeed, the map is generically injective, and its kernel is torsion, and its locally
free source on the integral curve B is torsion free, so the kernel vanishes and gives that $C \simeq \Omega^1_{k/\mathbb{F}_p} \otimes_k \mathcal{O}_B$. 
We take top exterior powers $C_{\Phi} \hookrightarrow U \otimes_k \mathcal{O}_B$  to get
$$
\det C_{\Phi} \longrightarrow
\bigwedge^{\operatorname{rk} W_0} U \otimes_k \mathcal{O}_B,
$$
whose target is a trivial vector bundle. At least one component is a nonzero map
$
\det C_{\Phi} \longrightarrow \mathcal{O}_B.$
Therefore, $
\deg C_{\Phi} \leq 0$ and 
\begin{equation}\label{eq}
\begin{aligned}
\deg\mathcal V &=\deg \im \Phi+\deg \ker \Phi\\ &\leq 2g-2-\deg \left(\Omega^1_{B/k}\right)_{\mathrm{tor}}+\deg (\ker \Phi)/C_{\Phi} +\deg C_{\Phi}\\
& \leq
2g-2-\deg \left(\Omega^1_{B/k}\right)_{\mathrm{tor}}+\deg S/C+0
\end{aligned}
\end{equation}

Since $\deg S/C \leq \deg \left(\Omega^1_{B/k}\right)_{\mathrm{tor}}$, we conclude $\deg\mathcal V\leq2g-2.$
\end{proof}

\subsection{Families of \texorpdfstring{$p$}{p}-Fermat hypersurfaces}\label{subsec:p-fermat}

The wild conic equation is a $2$-Fermat equation. The same differential mechanism applies to families of $p$-Fermat hypersurfaces. Shepherd--Barron observed that a wild conic
bundle has no discriminant curve and hence no analogue of the usual discriminant
formula \cite[Section~10]{ShepherdBarronFano}. Mori--Saito studied wild
hypersurface bundles in the classification of Fano threefolds
\cite{MoriSaito}. Schr\"oer studies $p$-Fermat hypersurfaces and characterizes their regularity by the $p$-independence of the coefficient field
\cite[Proposition~3.2 and Theorem~3.3]{SchroerFibrations}. More recently,
Tanaka's discriminant theory shows that the ordinary discriminant is useful for
generically smooth conic bundles but becomes less useful in the wild case, where even
the discriminant bundle can be difficult to determine
\cite[Subsection~1.1 and Remark~7.7]{TanakaDiscriminant}.

In characteristic $p$, the $p$-th power monomials define a canonical inclusion $F_B^*E\hookrightarrow\operatorname{Sym}^pE.$ The next theorem gives a numerical consequence for such families over regular curves over imperfect fields.

\begin{theorem}\label{thm:p-fermat-degree}
Let $k$ be a field of characteristic $p>0$, let $B$ be a geometrically integral
regular projective curve over $k$, and put
$
g:=h^1(B,\mathcal O_B).
$
Let $E$ be a vector bundle of rank $n+1$ on $B$, with $n\geq1$, and let $A$ be
a line bundle equipped with a line subbundle
$
s:A\hookrightarrow F_B^*E.
$
Via the canonical Frobenius subbundle
$F_B^*E\hookrightarrow\operatorname{Sym}^pE$, let
$X\subset\mathbb P_B(E)$ be the relative hypersurface defined by $s$, of degree $p$ on each fibre of $\mathbb P_B(E)\to B$. Assume that $X$ is geometrically integral over $k$ and that its generic fibre $X_\eta$ is a regular $p$-Fermat hypersurface. Then
$$
(n+1)\deg A-p\deg E\leq2g-2.
$$
\end{theorem}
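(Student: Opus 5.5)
We will deduce the inequality from Proposition~\ref{prop:differential-defect}, applied to the vector bundle
$$
\mathcal V:=\bigl(F_B^*E/A\bigr)^\vee\otimes A ,
$$
which has rank $n$. Since $s$ is a line subbundle, $F_B^*E/A$ is locally free of rank $n$, and its determinant is $\det F_B^*E\otimes A^{-1}=(\det E)^{\otimes p}\otimes A^{-1}$, because $\deg F_B^*E=p\deg E$. It follows that
$$
\deg\mathcal V=-\bigl(p\deg E-\deg A\bigr)+n\deg A=(n+1)\deg A-p\deg E .
$$
So the whole task is to produce an injection $\theta:\mathcal V\hookrightarrow(\Omega^1_{B/\mathbb F_p})_{\mathrm{tf}}$ whose composite to $(\Omega^1_{B/k})_{\mathrm{tf}}$ has generic rank one.

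\textbf{Constructing $\theta$.} Work locally on an open $U\subset B$ where $E$ has a basis $x_0,\dots,x_n$ and $A$ has a generator. Then $s$ corresponds to $\sum_i a_i x_i^p$ with $a_i\in\mathcal O(U)$, not all vanishing at any point. Define a local map $F_B^*E^\vee\to\Omega^1$ by sending the dual basis vector $x_i^{*(p)}$ to $da_i$. Pairing with $s$, the element $\sum_i a_i x_i^{*(p)}$ that cuts out $A$ is sent to $\sum_i a_i\,da_i$. We should not expect this to vanish; $F_B^*E^\vee$ itself is the wrong source. The correct source is $(F_B^*E/A)^\vee\otimes A$: a local section of it is a functional $\phi$ on $F_B^*E$ with $\phi(s)=0$, twisted by $A$. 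We send it to $\sum_i\phi(x_i^{p})\,da_i$ with $s$ substituted appropriately, that is, $\theta(\phi\otimes s)=\sum_i \phi_i\,da_i$ where $\sum_i\phi_i a_i=0$.

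To check that $\theta$ is well defined we must examine the change of basis $x\mapsto Mx$. The coefficients transform through $F^*M$, whose entries are $p$-th powers, so $d(m^p)=0$ and the $da_i$ transform tensorially. Rescaling the generator of $A$ by a unit $u$ changes $\sum\phi_i\,d(ua_i)$ by the term $\sum\phi_i a_i\,du=0$, which vanishes precisely because $\phi$ kills $s$. This is why the quotient by $A$ and the twist by $A$ appear. We then compose with the projection to the torsion-free quotient. The construction is essentially the dual of the Mori--Saito map $\alpha$.

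\textbf{Injectivity and generic rank.} Both properties can be checked at the generic point $\eta$, since $\mathcal V$ is torsion free and $\Omega^1_{B/\mathbb F_p}$ has been made torsion free. After rescaling we may assume $a_0=1$. The image of $\theta_\eta$ is then the $K$-span of $da_1,\dots,da_n$ in $\Omega^1_{K/\mathbb F_p}$, where $K=K(B)$. By Schr\"oer's criterion \cite[Theorem~3.3]{SchroerFibrations}, regularity of the $p$-Fermat hypersurface $X_\eta$ is equivalent to $p$-independence of $a_1,\dots,a_n$ over $K^p$, that is, to linear independence of $da_1,\dots,da_n$ in $\Omega^1_{K/\mathbb F_p}$. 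Since $\mathcal V$ has rank $n$, this gives injectivity of $\theta$.

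For the generic rank one condition we must show that the images of $da_1,\dots,da_n$ in $\Omega^1_{K/k}$, which is one-dimensional by Lemma~\ref{lem:base-function-field-regular}, are not all zero. Suppose they were. Then every $a_i$ would lie in $K^pk$. In that case $X_\eta$ becomes non-reduced after base change to $\bar k$, because $\sum a_ix_i^p$ becomes a $p$-th power, contradicting geometric integrality of $X$. Hence the composite has generic rank one, and its rank is at most one because the target has rank one.

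\textbf{Conclusion.} Proposition~\ref{prop:differential-defect} then gives $\deg\mathcal V\le 2g-2$, which is the claimed inequality $(n+1)\deg A-p\deg E\le 2g-2$.

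The main obstacle is the step where we transfer the condition to $K$. The careful translation of geometric integrality of $X$ into the statement that some $a_i$ does not lie in $K^pk$ is delicate. We first try the argument over $k^{1/p}$: if all $a_i\in K^pk$, then all $a_i$ lie in $(Kk^{1/p})^p$, and the form becomes a $p$-th power over $K\otimes_k k^{1/p}$. A secondary concern is the intrinsic, coordinate-free verification of the gluing of $\theta$.
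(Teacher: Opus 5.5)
Your proposal is correct and follows essentially the same route as the paper. It uses the same bundle $\mathcal V=(F_B^*E/A)^\vee\otimes A$ and the same map $\theta$; the paper obtains $\theta$ intrinsically as the second fundamental form of $A$ for the canonical Frobenius connection on $F_B^*E$, which is your coordinate construction with $p$-th power transition matrices. It then uses Schr\"oer's criterion for generic injectivity, and $\ker d_{K/k}=K^pk$ plus a purely inseparable base change (the paper takes the finite extension $k(c_1^{1/p},\dots,c_r^{1/p})$ and proves the kernel identity via a one-element $p$-basis) to force relative rank one, before invoking Proposition~\ref{prop:differential-defect}.
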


\begin{proof}
Put $K:=K(B)$.
Since $A\subset F_B^*E$ is a line subbundle, $F_B^*E/A$ is locally free of rank $n$.
The Frobenius pullback carries its canonical connection
$$
\nabla:F_B^*E
\longrightarrow
F_B^*E\otimes\Omega^1_{B/\mathbb F_p},
\qquad
\nabla(a\otimes e)=da\otimes e.
$$
Restricting $\nabla$ to $A$ and projecting to $F_B^*E/A$ gives the second fundamental
form
$
\beta:A\longrightarrow F_B^*E/A \otimes\Omega^1_{B/\mathbb F_p}.
$
The Leibniz correction term lies in $A$ and disappears after projection, so
$\beta$ is $\mathcal O_B$ linear. By adjunction, it is equivalently a morphism
$$
\theta:(F_B^*E/A)^\vee\otimes A\longrightarrow\Omega^1_{B/\mathbb F_p}.
$$

We first determine the generic rank of $\theta$. Choose a generator $\ell$ of
$A_K$ and a basis $e_0,\ldots,e_n$ of $E_K$. After scaling $\ell$, the generic
equation has the form
$$
x_0^p+u_1x_1^p+\cdots+u_nx_n^p=0,
$$
and
$
s(\ell)=e_0^{[p]}+u_1e_1^{[p]}+\cdots+u_ne_n^{[p]}.
$
Here $e_i^{[p]}$ denotes the Frobenius pullback basis, which is horizontal for the canonical connection. Let $q_i$ be the class of $e_i^{[p]}$ in $(F_B^*E/A)_K$ for
$1\leq i\leq n$. Then
$
\beta_K(\ell)=\sum_{i=1}^nq_i\otimes du_i,
$
and hence, for the dual basis $q_i^\vee$, $\theta_K(q_i^\vee\otimes\ell)=du_i.$

By \cite[Proposition~3.2 and Theorem~3.3]{SchroerFibrations}, regularity of
the generic $p$-Fermat hypersurface is equivalent to the coefficient ratios
$u_1,\ldots,u_n$ having $p$-degree $n$ over $K^p$. Thus they are
$p$-independent. The $p$-basis criterion
\cite[Tag 07P2]{StacksProject} implies that
$du_1,\ldots,du_n$ are linearly independent in
$\Omega^1_{K/\mathbb F_p}$. Therefore $\theta_K$ is injective, and
$\theta$ induces an injection
$$
(F_B^*E/A)^\vee\otimes A
\hookrightarrow
\left(\Omega^1_{B/\mathbb F_p}\right)_{\mathrm{tf}}.
$$

We next show that the composite with
$R:=(\Omega^1_{B/k})_{\mathrm{tf}}$ has generic rank one. Since $B$ is
geometrically integral, $K/k$ is regular of transcendence degree one, so
$\dim_K\Omega^1_{K/k}=1$ and the rank is at most one. Suppose that it were
zero. Then $d_{K/k}u_i=0$ for $(1\leq i\leq n).$
By \cite[Tag 07P2]{StacksProject}, $K/k$ has a one element $p$-basis
$t$. Thus $1,t,\ldots,t^{p-1}$ is a basis of $K$ over the compositum
$K^pk$ of $K^p$ and $k$ inside $K$. Differentiating in this basis shows
that an element has zero derivative exactly when only its constant
coefficient remains. Hence
$
\ker(d_{K/k})=K^pk.
$

Hence $u_i\in K^pk$ for every $i$. Since there are only finitely many
coefficients, we may choose $c_1,\ldots,c_r\in k$ such that
$
u_i\in K^p(c_1,\ldots,c_r)
\text{ for }
1\leq i\leq n.
$
Set
$
k':=k(c_1^{1/p},\ldots,c_r^{1/p}).
$
Then $k'/k$ is finite and purely inseparable and
$u_i\in(Kk')^p$ for all $i$.

Geometric integrality of $B$ and $X$ implies that $B_{k'}$ and $X_{k'}$ are
integral. Since the defining equation is nonzero on every fibre, the projection
$X\to B$ is dominant. The generic fibre of $X_{k'}\to B_{k'}$ is therefore
integral and its affine coordinate rings are localizations of domains. Its ground
field is $K(B_{k'})=Kk'$, obtained by extending the constants from $k$ to $k'$.
On the other hand, choosing $a_i\in Kk'$ with $a_i^p=u_i$, its equation is
$$
x_0^p+u_1x_1^p+\cdots+u_nx_n^p
=
\left(x_0+a_1x_1+\cdots+a_nx_n\right)^p,
$$
which is nonreduced. This contradiction shows that the relative component of
$\theta_K$ is nonzero. Its target is one dimensional, so it has rank one.

Proposition~\ref{prop:differential-defect}, applied to
$\mathcal V=(F_B^*E/A )^\vee\otimes A$, now gives
$
\deg(F_B^*E/A )^\vee\otimes A)\leq2g-2.
$
Since
$
\deg F_B^*E/A
=
\deg(F_B^*E)-\deg A
=
p\deg E-\deg A
$
and $\operatorname{rank} (F_B^*E/A )=n$, we have
$$
\deg((F_B^*E/A )^\vee\otimes A)
=
-\deg (F_B^*E/A)+n\deg A
=
(n+1)\deg A-p\deg E.
$$

\end{proof}
\begin{remark}\label{rem:p-fermat-context}
The differential mechanism above is the same one that appears in the classical
smooth base theory of wild hypersurface bundles. In the setup of Mori--Saito,
Sato records a Frobenius line
$\mathcal O_Sj\subset F_S^*E\otimes L$ and a surjection from $T_S$ to its
quotient; in the maximal case this gives an exact sequence
\cite[Definition~3.1, Theorem~3.2, and Definition~3.3]{SatoWild}. Theorem~\ref{thm:p-fermat-degree} instead keeps only the numerical consequence over a
regular curve which need not be smooth over $k$. Geometric integrality of $X$
is used precisely to rule out relative rank zero; it cannot simply be omitted,
as a constant family with regular $p$-Fermat fibre becomes a $p$-fold hyperplane after a
purely inseparable extension of the constants.
\end{remark}

\section{Wild Conic Bundles in Characteristic \texorpdfstring{$2$}{2}}\label{sec:wild-conics}

We now return to the wild conic geometry needed for Theorem~A. Throughout this section, $k$ has characteristic $2$ and $\pi:Z\longrightarrow B$ is a flat contraction from a geometrically integral regular projective surface to a geometrically integral regular projective curve. We assume that every fibre is an irreducible reduced plane conic and that the generic fibre is regular but non smooth. Since $B$ is geometrically integral over $k$, the extension $K(B)/k$ is regular; properness also gives $H^0(B,\mathcal O_B)=k$. We identify the generic conic as anisotropic and quasilinear and show that its relative equation defines the Frobenius line required by Theorem~\ref{thm:p-fermat-degree}. The wild conic estimate is then a direct specialization of that theorem, after which a parity argument sharpens the extremal curve estimate.

\subsection{Quasilinear conics and the Frobenius equation}

We begin with the normal form of the generic fibre. Non smoothness together with regularity forces the polar form to vanish, and regularity then forces the resulting quasilinear form to be anisotropic. We refer \cite{TotaroQuadrics} for the definition of terms like anisotropic, etc used in this section.

\begin{lemma}\label{lem:quasilinear-conic}
Let $K$ be a field of characteristic $2$, and let
$C\subset \mathbb P^2_K$ be a regular integral conic which is not smooth over
$K$. Then $C$ is defined by an anisotropic quasilinear quadratic form.
Equivalently, after choosing coordinates, its equation is a sum of square
terms and has no mixed terms. In fact, any quasilinear plane conic with a
$K$ rational point is not regular at that point.
\end{lemma}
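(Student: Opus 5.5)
Write $C=V(Q)$ with
$$
Q=a x^2+b y^2+c z^2+d\,xy+e\,xz+f\,yz .
$$
The plan is to first show that the polar bilinear form $b_Q(v,w)=Q(v+w)-Q(v)-Q(w)$ vanishes, then use regularity to show that the resulting diagonal form is anisotropic.

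\emph{Step 1: the polar form vanishes.} In characteristic $2$ the polar form is alternating, so its rank on the $3$-dimensional space $V=K^3$ is $0$ or $2$.

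Suppose the rank is $2$. Then the radical $\mathrm{rad}(b_Q)$ is a line $Kr$. Choose coordinates with $r=(0,0,1)$, so that $b_Q$ is a nondegenerate alternating form in $x,y$. Then
$$
Q=a x^2+xy+b y^2+c z^2 \quad\text{after rescaling.}
$$
If $c\neq0$, the Jacobian criterion over $\bar K$ applies. The partial derivatives are $\partial_x Q=y$, $\partial_y Q=x$ and $\partial_z Q=0$. Hence a singular point of $C_{\bar K}$ must satisfy $x=y=0$, which forces $c z^2=0$ and so $z=0$. So there is no singular point and $C$ is smooth. If $c=0$, then $Q$ does not involve $z$, so $C$ is a cone over two points; it is either reducible or geometrically reducible. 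It is singular at the vertex $(0:0:1)$, which is a $K$-point, and there the local ring is not regular. Either way this contradicts the hypotheses.

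Therefore $b_Q=0$, i.e.\ $d=e=f=0$, and $Q=ax^2+by^2+cz^2$ is quasilinear.

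\emph{Step 2: the rational-point claim, and anisotropy.} Suppose $Q$ is quasilinear and $P\in C(K)$. Move $P$ to $(0:0:1)$, so $c=0$. In the chart $z=1$ the local equation is $ax^2+by^2=0$, which lies in $\mathfrak m_P^2$. Hence
$$
\dim \mathfrak m_P/\mathfrak m_P^2 = 2 > 1,
$$
and $C$ is not regular at $P$.

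Now apply this to our $C$. Since $C$ is regular, $C(K)=\emptyset$. A nonzero isotropic vector $v$ with $Q(v)=0$ would give a $K$-point $[v]\in C(K)$, so $Q$ has no such vector and is anisotropic. Integrality of $C$ guarantees $Q\neq0$; anisotropy then rules out, for example, a perfect square $(\alpha x+\beta y+\gamma z)^2$, since such a form vanishes on a $K$-line.

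\emph{Main obstacle.} The delicate part is Step 1, classifying the rank-$2$ case. One must check that the rescaling normal form is legitimate: after choosing a symplectic basis for $b_Q$, the coefficient of $xy$ can be made $1$, and the cross terms with $z$ vanish because $r$ spans the radical. The Jacobian computation over $\bar K$ is then routine.
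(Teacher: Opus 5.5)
Your proof is correct. For the final assertion it is the same argument as the paper's: move the rational point to $[0:0:1]$ and observe that the affine equation $a'x^2+b'y^2$ lies in $\mathfrak m^2$. Your cotangent-space computation also makes precise that ``singular'' in the paper's sentence has to mean non-regular, not merely non-smooth.

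For the main assertion the paper gives no argument of its own; it simply cites \cite[Corollary~2.5]{AuelBigazziBohningBothmer}. You instead prove it directly. Because the polar form $b_Q$ is alternating, it has rank $0$ or $2$. In the rank-$2$ case the normal form is $ax^2+xy+by^2+cz^2$, and there are two subcases:
\begin{itemize}
\item if $c\neq 0$, the conic is smooth by the Jacobian criterion;
\item if $c=0$, it is a cone with a $K$-rational, non-regular vertex.
\end{itemize}
You rightly keep $Q=0$ among the Jacobian equations. This matters because Euler's identity gives $\sum x_i\partial_iQ=2Q=0$ in characteristic $2$, so the vanishing of the partials alone does not force a point onto the conic.

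The citation is shorter. Your route is self-contained and shows exactly where each hypothesis enters: non-smoothness excludes the rank-$2$, $c\neq0$ case, while regularity excludes the cone and then, via the rational-point criterion, forces anisotropy.
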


\begin{proof}
This is \cite[Corollary~2.5]{AuelBigazziBohningBothmer}. 
For the final assertion, let $C$ be a quasilinear plane conic and
suppose that $P\in C(K)$. After a change of
coordinates, we may assume that $P=[0:0:1].$
Then the origin in the affine chart $Z=1$ is singular.
\end{proof}

We next globalize the preceding description. In characteristic $2$, the square monomials form the Frobenius pullback $F_B^*E\subset\operatorname{Sym}^2E$, and generic quasilinearity forces the relative conic equation to factor through this subbundle.
In characteristic $2$ there is a canonical Frobenius square subbundle
$F_B^*E\hookrightarrow \operatorname{Sym}^2E.$ When $\operatorname{rank}E=3$, the quotient is locally free
of rank $3$, locally spanned by the mixed monomials
$e_1e_2,e_1e_3,e_2e_3$.

\begin{proposition}\label{prop:wild-equation}
Assume $\operatorname{char}(k)=2$ and the generic fibre $Z_\eta$ of
$\pi:Z\to B$ is a regular but non smooth conic over $K(B)$. Then the
relative conic equation
$
\det E\longrightarrow \operatorname{Sym}^2E
$
factors through the Frobenius square subbundle
$$
F_B^*E\hookrightarrow \operatorname{Sym}^2E.
$$
Moreover, the induced morphism
$
\det E\hookrightarrow F_B^*E
$
is a line subbundle.
\end{proposition}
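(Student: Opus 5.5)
The plan has two parts: first show that the factorisation holds generically, then saturate, and finally check that the image of $\det E$ is a subbundle. By Proposition~\ref{prop:relative-anticanonical-model}, $Z\subset\mathbb P_B(E)$ is cut out by a section
$$
q\in H^0\bigl(B,\operatorname{Sym}^2E\otimes(\det E)^{-1}\bigr),
$$
that is, by a morphism $\det E\to\operatorname{Sym}^2E$. Since $\operatorname{rank}E=3$ and $\operatorname{char}k=2$, there is an exact sequence
$$
0\to F_B^*E\to\operatorname{Sym}^2E\to Q\to0,
$$
with $Q$ locally free of rank $3$, locally spanned by the mixed monomials. The composite $\det E\to\operatorname{Sym}^2E\to Q$ is a map of locally free sheaves on the integral curve $B$. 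So it vanishes as soon as it vanishes at the generic point, because a map from a line bundle to a torsion free sheaf that is generically zero is zero.

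At the generic point $\eta$, the conic $Z_\eta\subset\mathbb P^2_{K(B)}$ is regular, integral and non smooth. Lemma~\ref{lem:quasilinear-conic} then gives coordinates in which $q_\eta$ is a sum of squares. I would phrase the conclusion without coordinates. The polar bilinear form $b(x,y)=q(x+y)-q(x)-q(y)$ of $q_\eta$ vanishes identically, and for $q=\sum a_{ii}x_i^2+\sum_{i<j}a_{ij}x_ix_j$ the polar form has coefficients exactly $a_{ij}$. Vanishing of $b$ is therefore the same as the image in $Q_\eta$ being zero, and this condition is independent of the basis. Hence the composite to $Q$ vanishes and $\det E\to\operatorname{Sym}^2E$ factors through $F_B^*E$, since the inclusion $F_B^*E\to\operatorname{Sym}^2E$ is injective with cokernel $Q$.

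It remains to show that $\det E\to F_B^*E$ is a line subbundle, meaning its cokernel is locally free. This is the step I expect to be the main obstacle. Equivalently, at each closed point $b\in B$ the induced map on fibres $\det E\otimes\kappa(b)\to F_B^*E\otimes\kappa(b)$ must be nonzero. Suppose instead that it vanished at $b$. Then $s=\varpi s'$ for a uniformiser $\varpi$ and some local section $s'$ of $F_B^*E$. Because $F_B^*E\subset\operatorname{Sym}^2E$ is a subbundle, this would mean that $q$ itself lies in $\varpi\cdot\operatorname{Sym}^2E$ near $b$. The fibre equation $q\otimes\kappa(b)$ would then be identically zero, so $Z_b$ would be all of $\mathbb P^2_{\kappa(b)}$. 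But $Z$ is a Cartier divisor that is flat over $B$ with conic fibres, so this is impossible.

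To make the argument clean, I would argue with the $\operatorname{Sym}^2E$ version directly: the map $\det E\to\operatorname{Sym}^2E$ is nonvanishing on every fibre because the fibre $Z_b$ is a conic, with Hilbert polynomial $2t+1$, and not the whole plane. A subsheaf of a subbundle that is saturated in the big bundle is saturated in the subbundle. Hence $\det E\to F_B^*E$ has locally free cokernel, which completes the proof.
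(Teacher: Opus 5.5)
Your proposal is correct and follows the paper's proof. You get generic vanishing of the composite to the mixed-monomial quotient from Lemma~\ref{lem:quasilinear-conic}, extend it to all of $B$ by torsion-freeness, and then obtain the line subbundle from fibrewise nonvanishing of the equation, since $Z_b$ is a conic rather than all of $\mathbb P^2_{\kappa(b)}$. Your coordinate-free reformulation via the polar form is a small but useful clarification of the paper's ``after choosing coordinates''.
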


\begin{proof}
Proposition~\ref{prop:relative-anticanonical-model} identifies the relative
equation with a morphism $\det E\to\operatorname{Sym}^2E$. Its composite
with the locally free quotient $\operatorname{Sym}^2E/F_B^*E$ vanishes
generically by Lemma~\ref{lem:quasilinear-conic}. Since
$(\operatorname{Sym}^2E/F_B^*E)\otimes(\det E)^{-1}$ is torsion free on the integral curve $B$, the
composite vanishes everywhere, giving $s:\det E\to F_B^*E$.

By base change compatibility of the relative anticanonical embedding,
$s\otimes\kappa(b)$ defines the plane conic $Z_b$ for every $b\in B$,
so it is nonzero. Locally, $s$ is therefore a column vector with a unit
entry, and its image is a direct summand. Thus $s$ is a line subbundle.
\end{proof}

The preceding proposition puts the wild conic in the framework of Theorem~\ref{thm:p-fermat-degree}.  Here $p=2$, $n=2$, and the Frobenius line is $A=\det E$.  Thus the general differential estimate of Section~\ref{sec:differentials} already contains the characteristic $2$ conic estimate needed below.

\begin{theorem}\label{thm:wild-conic-estimate}
Let $k$ be a field of characteristic $2$, and let $\pi:Z\longrightarrow B$
be a flat contraction from a geometrically integral regular projective surface to a geometrically integral regular projective curve.  Assume that every fibre is an irreducible reduced plane conic and that the generic fibre is regular but non smooth.  Put
$$
g:=h^1(B,\mathcal O_B),
\qquad
E:=\pi_*\omega^{-1}_{Z/B}.
$$
Then $\deg E=K_{Z/B}^2\leq2g-2$.
Consequently,
$
K_Z^2\leq6-6g.
$
\end{theorem}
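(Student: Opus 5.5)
The plan is to specialise Theorem~\ref{thm:p-fermat-degree} to $p=2$, $n=2$ and $A=\det E$, and then convert the resulting bound on $\deg E$ into a bound on $K_Z^2$ via the canonical bundle formula already used in Proposition~\ref{prop:gensmooth-fibre-bound}.

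First, Proposition~\ref{prop:relative-anticanonical-model} gives a closed immersion $Z\hookrightarrow\mathbb P_B(E)$ with $\operatorname{rank}E=3$. It also gives $Z\sim 2H-\rho^*\det E$ and $K_{Z/B}^2=\deg E$. By Proposition~\ref{prop:wild-equation}, the relative equation $\det E\to\operatorname{Sym}^2E$ factors through a line subbundle $s:\det E\hookrightarrow F_B^*E$. So $Z$ is exactly the relative hypersurface attached to $s$ in Theorem~\ref{thm:p-fermat-degree}, with $A=\det E$.

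Next I check the remaining hypotheses of that theorem. $B$ and $Z$ are geometrically integral by assumption, and $B$ is regular projective. For the generic fibre, Lemma~\ref{lem:quasilinear-conic} says $Z_\eta$ is an anisotropic quasilinear conic. Scaling so that one coefficient is $1$, it has the form $x_0^2+u_1x_1^2+u_2x_2^2$ and is regular, so it is a regular $2$-Fermat hypersurface in the sense required.

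Theorem~\ref{thm:p-fermat-degree} then gives
$$(n+1)\deg A-p\deg E=3\deg E-2\deg E=\deg E\le 2g-2.$$
Hence $K_{Z/B}^2=\deg E\le 2g-2$.

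For the consequence, I use $K_Z\sim K_{Z/B}+\pi^*K_B$ (\cite[Lemma 4.26]{LiuAGAC}). I also use $K_{Z/B}\cdot\pi^*D=-2\deg D$ and $(\pi^*K_B)^2=0$, exactly as in the proof of Proposition~\ref{prop:gensmooth-fibre-bound}. This gives
$$K_Z^2=K_{Z/B}^2+8(1-g)\le 2g-2+8-8g=6-6g.$$

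The only step needing care is matching the hypotheses of Theorem~\ref{thm:p-fermat-degree}: the wild conic must be exactly the hypersurface defined by $s$ through the Frobenius square inclusion. Once Proposition~\ref{prop:wild-equation} is granted, this is the identification of the equation section with the image of $s$. The regularity of $X_\eta$ as a $p$-Fermat hypersurface is immediate from the regularity of $Z_\eta$. So I expect no genuine obstacle; the main work was done earlier.
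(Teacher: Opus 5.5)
Your proposal is correct and follows essentially the same route as the paper. Proposition~\ref{prop:wild-equation} supplies the Frobenius line $A=\det E\hookrightarrow F_B^*E$, and Theorem~\ref{thm:p-fermat-degree} with $p=2$, $n=2$ gives $3\deg E-2\deg E=\deg E\le 2g-2$; the identity $K_Z^2=K_{Z/B}^2+8(1-g)$ from the proof of Proposition~\ref{prop:gensmooth-fibre-bound} then yields $K_Z^2\le 6-6g$.
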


\begin{proof}
By Proposition~\ref{prop:wild-equation}, the relative equation defines a line
subbundle
$
A:=\det E\hookrightarrow F_B^*E.
$
The associated degree $2$ hypersurface in $\mathbb P_B(E)$ is precisely $Z$,
which is geometrically integral, and its generic $2$-Fermat conic is regular.
Theorem~\ref{thm:p-fermat-degree}, with $p=2$ and $n=2$, therefore gives
$
3\deg(\det E)-2\deg E\leq2g-2.
$
Since $\deg(\det E)=\deg E$, this is $\deg E\leq2g-2$.
Proposition~\ref{prop:relative-anticanonical-model} gives
$K_{Z/B}^2=\deg E$, and as in the proof of Proposition~\ref{prop:gensmooth-fibre-bound} we have 
$K_Z^2=K_{Z/B}^2+8(1-g)$. Combining all this, we get $K_Z^2=\deg E+8(1-g)\leq6-6g$.
\end{proof}

\subsection{Parity and the genus bound}

It remains to compare the preceding upper bound with the extremal curve lower bound from Section~\ref{sec:reduction}. In the wild case, anisotropy imposes an additional parity condition on every multisection of the generic conic, improving the numerical estimate.

\begin{lemma}\label{lem:wild-multisection-parity}
Let $C$ be an anisotropic quasilinear conic over a field $K$ of characteristic $2$. Every closed point of $C$ has even degree over $K$. In particular, in the wild fibre type situation the integer $m_\Gamma=[K(\Gamma):K(B)]$
is even.
\end{lemma}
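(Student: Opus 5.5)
We argue with a field-theoretic computation at a closed point and then specialise to the generic fibre. Write the conic as $C=V(a_0x_0^2+a_1x_1^2+a_2x_2^2)\subset\mathbb P^2_K$ with $a_i\in K^\times$; by Lemma~\ref{lem:quasilinear-conic} we may use such a diagonal form. Let $P\in C$ be a closed point with residue field $L:=\kappa(P)$, so $[L:K]=\deg P$. After permuting coordinates, we may assume $P$ lies in the chart $x_0\neq0$. Then $L=K(y_1,y_2)$, where $y_i=x_i/x_0$ and
$$
a_0+a_1y_1^2+a_2y_2^2=0 .
$$

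We first show that $[L:K]$ cannot be odd. Every $z\in L$ satisfies $z^2\in K(y_1^2,y_2^2)\subseteq L$, so $L$ is purely inseparable of exponent one over the subfield $L_0:=K(y_1^2,y_2^2)$, and $[L:L_0]$ is a power of $2$. Suppose $[L:K]$ were odd. Then $[L:L_0]=1$, so $L=L_0$. Moreover, since $K\subseteq L_0$ and $[L_0:K]$ is odd, the extension $L_0/K$ is separable: any inseparable part would contribute a factor of $2$. This gives $L^2K=L$, since $L=L_0\subseteq L^2K\subseteq L$, where $L_0\subseteq L^2K$ because $y_i^2\in L^2$.

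Now use the standard fact that for a finite separable extension $L/K$ in characteristic $p$ one has $L=L^pK$, and that the natural map $L^p\otimes_{K^p}K\to L$ is injective, so that $L^p$ and $K$ are linearly disjoint over $K^p$. The key step is to transport anisotropy across this extension. The $K$-span $V:=\langle a_0,a_1,a_2\rangle\subset K$ is a $K^2$-vector space, and anisotropy of $C$ says exactly that $a_0,a_1,a_2$ are $K^2$-linearly independent. The point $P$ gives the relation $a_0+a_1y_1^2+a_2y_2^2=0$ with coefficients $1,y_1^2,y_2^2\in L^2$, so $a_0,a_1,a_2$ are $L^2$-linearly dependent. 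By linear disjointness of $L^2$ and $K$ over $K^2$, elements of $K$ that are $K^2$-independent remain $L^2$-independent. Concretely, choose a $K^2$-basis of $K$ extending the $a_i$; this is also an $L^2$-basis of $L^2K=L$. The displayed relation therefore contradicts anisotropy. Hence $[L:K]$ is even. The step needing the most care is justifying $L^2\otimes_{K^2}K\hookrightarrow L$ for the separable extension $L/K$. This is standard, since $L\cong L^2\otimes_{K^2}K$ via Frobenius, and a degree count gives the isomorphism; this may be cited from the Stacks Project.

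For the application, take the wild fibre type situation, where $Z_\eta$ is a regular non smooth conic over $K(B)$. By Lemma~\ref{lem:quasilinear-conic}, $Z_\eta$ is an anisotropic quasilinear conic. The horizontal curve $\Gamma$ meets the generic fibre in a closed point $\Gamma_\eta$ with residue field $K(\Gamma)$. Hence $m_\Gamma=[K(\Gamma):K(B)]$ is the degree of a closed point of $Z_\eta$, and is even by the first part.
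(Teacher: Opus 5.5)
Your proof is correct and follows the paper's route. In both, odd degree forces $\kappa(P)/K$ to be separable, anisotropy of a quasilinear form survives separable extensions, and this contradicts the $\kappa(P)$-rational point; $m_\Gamma$ is then handled via the closed point $\Gamma_\eta$ of $Z_\eta$. The only difference is that you prove the persistence of anisotropy directly, using the linear disjointness $L^2\otimes_{K^2}K\cong L$ (which your degree count justifies), where the paper cites Totaro's Lemma~2.1; your detour through $L_0=K(y_1^2,y_2^2)$ is harmless but unnecessary, since odd degree already gives separability.
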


\begin{proof}
If a closed point $x\in C$ had odd degree, its residue extension
$\kappa(x)/K$ would be separable, since the inseparable degree is a power
of $2$. By \cite[Lemma~2.1]{TotaroQuadrics}, the defining quasilinear form
remains anisotropic under separable extensions, contradicting the
$\kappa(x)$-rational point. Hence every closed point has even degree.
Applying this to the point of $Z_\eta$ defined by the generic point of
$\Gamma$, whose residue field is $K(\Gamma)$, shows that $m_\Gamma$ is even.
\end{proof}

\begin{proposition}\label{prop:char2-wild}
Retain the fibre type MMP setting of
Subsections~\ref{subsec:regular-mmp}--\ref{subsec:conic-output}. Assume
$\operatorname{char}(k)=2$ and that the generic fibre of $\pi$ is a regular
non smooth conic over $K(B)$. Then
$$
q(Z)
\leq
\max\left\{1,\frac1\varepsilon-\frac13\right\}
\leq
1+\frac1\varepsilon.
$$
If $0<\varepsilon\leq1$, then in particular $q(Z)\leq1/\varepsilon$.
\end{proposition}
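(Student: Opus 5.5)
The plan is to play the wild upper bound of Theorem~\ref{thm:wild-conic-estimate} against the extremal-curve lower bound from the proof of Proposition~\ref{prop:kz-lower-bound}. The parity lemma sharpens the latter.

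\textbf{Checking the hypotheses and the upper bound.} Set $g:=h^1(B,\mathcal O_B)$, so $q(Z)=g$ by Subsection~\ref{subsec:conic-output}. I will check that Theorem~\ref{thm:wild-conic-estimate} applies.
\begin{itemize}
\item $\pi$ is flat by Subsection~\ref{subsec:conic-output}.
\item Every fibre is an irreducible reduced plane conic by \cite[Proposition~2.18]{BernasconiTanaka}.
\item $Z$ and $B$ are geometrically integral by Lemma~\ref{lem:geometric-integrality-output}.
\end{itemize}
The theorem then gives $K_Z^2\le 6-6g$.

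\textbf{Lower bound, split by the sign of $A_\Gamma$.} Let $\Gamma$ be the extremal non-fibre curve and $n\le 2/\varepsilon$, with $A_\Gamma:=8m_\Gamma+4n(1-m_\Gamma)$ as in the proof of Proposition~\ref{prop:kz-lower-bound}.

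If $A_\Gamma\ge0$, then $K_Z^2\ge0$. Hence $6-6g\ge0$, i.e.\ $g\le1$.

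If $A_\Gamma<0$, that proof shows $d_\Gamma\le m_\Gamma\le5$ and
$$K_Z^2\ \ge\ 8-4n\,\frac{m_\Gamma-1}{m_\Gamma}.$$
Now I use parity. By Lemma~\ref{lem:quasilinear-conic}, the generic fibre $Z_\eta$ is an anisotropic quasilinear conic over $K(B)$. Since $\Gamma$ is horizontal, its generic point is a closed point of $Z_\eta$ with residue field $K(\Gamma)$. Lemma~\ref{lem:wild-multisection-parity} then says $m_\Gamma$ is even, so $m_\Gamma\in\{2,4\}$. Consequently $(m_\Gamma-1)/m_\Gamma\le 3/4$, and
$$K_Z^2\ \ge\ 8-3n\ \ge\ 8-\frac{6}{\varepsilon}.$$
Combining with the upper bound, $8-6/\varepsilon\le 6-6g$, which gives
$$g\ \le\ \frac1\varepsilon-\frac13.$$

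\textbf{Final inequalities.} The two cases together give $q(Z)=g\le\max\{1,\tfrac1\varepsilon-\tfrac13\}$. This is at most $1+\tfrac1\varepsilon$ trivially. If $0<\varepsilon\le1$, then $1\le 1/\varepsilon$ and $\tfrac1\varepsilon-\tfrac13<\tfrac1\varepsilon$, so $q(Z)\le 1/\varepsilon$.

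\textbf{Main obstacle.} The delicate point is justifying the parity step. One must confirm that the generic conic is anisotropic, not merely quasilinear; this follows from the last assertion of Lemma~\ref{lem:quasilinear-conic}, since a rational point would contradict regularity. One must also confirm that the bound $m_\Gamma\le5$ from \cite[Proposition~4.7]{BernasconiTanaka} is available exactly in the case $A_\Gamma<0$, where we use it. Everything else is arithmetic.
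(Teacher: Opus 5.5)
Your proposal is correct and is essentially the paper's own proof. Both arguments take the upper bound $K_Z^2\le 6-6g$ from Theorem~\ref{thm:wild-conic-estimate} and split on the sign of $A_\Gamma$ (the paper's $\Xi$); in the negative case, $d_\Gamma\le m_\Gamma\le 5$ together with the parity Lemma~\ref{lem:wild-multisection-parity} forces $m_\Gamma\in\{2,4\}$, hence $K_Z^2\ge 8-3n\ge 8-6/\varepsilon$ and $g\le\frac1\varepsilon-\frac13$. Your explicit checks of the hypotheses of Theorem~\ref{thm:wild-conic-estimate} and of anisotropy of the generic conic are details the paper leaves implicit.
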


\begin{proof}
Set $g:=h^1(B,\mathcal O_B)$.
By Subsection~\ref{subsec:conic-output}, $q(Z)=g$. Theorem~\ref{thm:wild-conic-estimate} gives $K_Z^2\leq6-6g$.
Let $\Gamma$ be the extremal curve of \cite[Lemma~5.5]{BernasconiMartin}, and retain the notation $d_\Gamma,m_\Gamma,n$ from the proof of Proposition~\ref{prop:kz-lower-bound}. Thus
$$
K_Z^2=
\frac{d_\Gamma}{m_\Gamma^2}
\bigl(8m_\Gamma+4n(1-m_\Gamma)\bigr),
\qquad
n\leq\frac2\varepsilon.
$$
Put $\Xi:=8m_\Gamma+4n(1-m_\Gamma)$.
If $\Xi\geq0$, then $K_Z^2\geq0$, and the upper bound $K_Z^2\leq6-6g$ gives $g\leq1$.

Assume $\Xi<0$. The negative case in the proof of
Proposition~\ref{prop:kz-lower-bound} gives $d_\Gamma\leq m_\Gamma\leq5$.
Lemma~\ref{lem:wild-multisection-parity} shows that $m_\Gamma$ is even, so
$m_\Gamma\in\{2,4\}$.
Because $\Xi<0$ and $d_\Gamma\leq m_\Gamma$,
$$
K_Z^2
=
\frac{d_\Gamma}{m_\Gamma^2}\Xi
\geq
\frac1{m_\Gamma}\Xi
=
8-4n\frac{m_\Gamma-1}{m_\Gamma}.
$$
Since $m_\Gamma\leq4$, $K_Z^2\geq8-3n\geq8-\frac6\varepsilon$.
Combining this with $K_Z^2\leq6-6g$ yields $g\leq\frac1\varepsilon-\frac13$.
The two cases give
$$
g\leq\max\left\{1,\frac1\varepsilon-\frac13\right\},
$$
and the remaining assertions follow.
\end{proof}

\section{Proof of the Irregularity Theorem}\label{sec:completion}

We now assemble the preceding estimates. For fibre type outputs, characteristic $2$ was treated in Sections~\ref{sec:relative-conics} and~\ref{sec:wild-conics}; in characteristics $3$ and $5$, the generic conic is smooth by the result of Witaszek recalled below. The remaining MMP outputs are regular del Pezzo surfaces and are controlled by the results of Bernasconi--Martin.

\begin{proof}[Proof of Theorem~\ref{thm:main}]
Subsection~\ref{subsec:regular-mmp} gives $q(X)=q(Z)$, so it is enough to bound $q(Z)$.

Suppose first that $\dim B=1$. If the generic fibre is smooth, then Proposition~\ref{prop:gensmooth-fibre-bound} gives
$$
q(Z)\leq1+\frac{4}{5\varepsilon}\leq1+\frac1\varepsilon,
$$
If the generic fibre is a non-smooth conic, then $\operatorname{char}(k)\neq \{3,5\}$. Indeed, $\eta$ be the generic point of $B$ and write $F_\eta:=Z_\eta$. By the conic bundle description, $F_\eta$ is a regular integral plane conic over $K(B)$. Adjunction gives $\omega_{F_\eta/K(B)}\simeq\mathcal O_{F_\eta}(-1)$, so $\deg K_{F_\eta}=-2$. Moreover, $H^0(F_\eta,\mathcal O_{F_\eta})=K(B)$. Since $\pi$ is a contraction of relative dimension one and $\operatorname{char}(k)>2$, \cite[Proposition~2.2]{WitaszekCBF} applies and shows that $F_\eta$ is smooth over $K(B)$. So, when the generic fibre is a non-smooth conic, then $\operatorname{char}(k)=2$, in which case Proposition~\ref{prop:char2-wild} gives $q(Z)\leq1+\frac1\varepsilon$.

It remains to consider $\dim B=0$. Then $Z$ is a regular geometrically integral del Pezzo surface by Lemma~\ref{lem:geometric-integrality-output}. Its structure morphism to $\Spec k$ is locally complete intersection, since both source and target are regular; see \cite[Tag 0E9K]{StacksProject}. In characteristic $2$, \cite[Theorem~1.2]{BernasconiMartin} gives $q(Z)\leq1$. In characteristic $3$, regularity makes $Z$ canonical, and \cite[Proposition~4.10]{BernasconiMartin} shows that $Z$ is tame in the terminology of \cite{BernasconiMartin}, hence $q(Z)=0$. In characteristic $5$, positive irregularity is excluded by \cite[Theorem~1.2]{BernasconiMartin}, so again $q(Z)=0$. These cases exhaust the MMP outputs. Therefore $q(X)=q(Z)\leq1+\frac1\varepsilon.$
\end{proof}

\section{BAB in All Positive Characteristics}\label{sec:bab}

We combine Theorem~\ref{thm:main} with the numerical bounds of \cite{BernasconiMartin} and the boundedness theorem for polarised surfaces of \cite{KollarModuli}. Bounded irregularity gives finitely many possible Euler characteristics, and hence finitely many Hilbert polynomials. The argument here is essentially the same as Bernasconi-Martin’s argument, and goes back to Koll\'ar.

\begin{theorem}\label{thm:BAB-small-char}
Fix a rational number $\varepsilon>0$ and let $p\in\{2,3,5\}$. Then the class
$$
\mathcal X_{dP,\varepsilon}^{(p)}:=\left\{X \;\middle|\; X \text{ is a geometrically integral }\varepsilon\text{-klt del Pezzo surface over a field of characteristic }p\right\}
$$
is bounded over $\Spec \mathbb Z$.
\end{theorem}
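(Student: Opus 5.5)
The proof will follow the Koll\'ar-style argument used by Bernasconi--Martin, with Theorem~\ref{thm:main} supplying the one missing input. Fix $\varepsilon$ and $p\in\{2,3,5\}$, and let $X$ be in $\mathcal X_{dP,\varepsilon}^{(p)}$.

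\textbf{Step 1: a uniform polarisation.} From the numerical results of \cite{BernasconiMartin} I take:
\begin{itemize}
\item a uniform bound $K_X^2\le C_1(\varepsilon)$;
\item a uniform bounded Cartier index, i.e.\ an integer $N=N(\varepsilon)$ such that $NK_X$ is Cartier for every $\varepsilon$-klt surface singularity over any field;
\item a uniform effective very ampleness statement: there is $m=m(\varepsilon)$ such that $L:=-mNK_X$ is very ample and its higher cohomology vanishes.
\end{itemize}
These are exactly the ingredients Bernasconi--Martin use away from $p\in\{2,3,5\}$. I expect them to be characteristic independent, as in their Remark~5.13. The only place where small characteristic entered their argument is the Euler characteristic.

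\textbf{Step 2: finitely many Hilbert polynomials.} By Riemann--Roch on the normal surface $X$, the Hilbert polynomial of $L$ is
\[
\chi(X,tL)=\tfrac12\,t^2L^2-\tfrac12\,tL\cdot K_X+\chi(\mathcal O_X)+(\text{correction terms}).
\]
The correction terms are periodic in $t$ and depend only on the local singularity types. They take finitely many values because $N$ is bounded and, by Alexeev-type finiteness of the minimal resolution graphs of $\varepsilon$-klt points, there are finitely many such graphs.

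The quadratic and linear coefficients are $m^2N^2K_X^2$ and $mN K_X^2/2$. Both take finitely many values, since $K_X^2$ is a positive rational number with denominator dividing $N^2$ and bounded by $C_1(\varepsilon)$.

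For the constant term, $h^0(\mathcal O_X)=1$ because $H^0(X,\mathcal O_X)=k$. By Serre duality, $h^2(\mathcal O_X)=h^0(K_X)=0$, since $-K_X$ is ample. Hence $\chi(\mathcal O_X)=1-q(X)$. Theorem~\ref{thm:main} gives $0\le q(X)\le 1+1/\varepsilon$, so $\chi(\mathcal O_X)$ takes finitely many values. It follows that only finitely many Hilbert polynomials $P$ occur.

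\textbf{Step 3: parametrising by Hilbert schemes.} Each $X$ embeds by $|L|$ into $\mathbb P^{P(1)-1}_k$ with Hilbert polynomial $P$. For each of the finitely many $P$, the Hilbert scheme $\mathrm{Hilb}^P(\mathbb P^{P(1)-1}_{\mathbb Z})$ is projective over $\Spec\mathbb Z$. Taking the finitely many universal families shows that $\mathcal X_{dP,\varepsilon}^{(p)}$ is bounded over $\Spec\mathbb Z$.

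\textbf{Main obstacle.} The delicate point is Step 1. One must check that Bernasconi--Martin's effective very ampleness and index bounds really hold over imperfect fields in characteristics $2,3,5$, and do not secretly use $q=0$ or the exclusion of these primes. Vanishing of $H^1(X,\mathcal O_X(tL))$ for $t\ge1$ is not automatic over imperfect fields in small characteristic. I would first try to read off from their Remark~5.13 that the irregularity bound is the only missing input. If that fails, the fallback is Matsusaka/Koll\'ar-type boundedness for polarised surfaces \cite{KollarModuli}, which needs only bounds on $L^2$, $L\cdot K_X$ and $\chi(\mathcal O_X)$. Those are exactly the quantities controlled in Step 2.
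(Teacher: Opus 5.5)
Your outline matches the paper's: Theorem~\ref{thm:main} makes $\chi(\mathcal O_X)=1-q(X)$ range over a finite set, and \cite{BernasconiMartin} supplies the volume bound and a uniform Cartier index. Together these give finitely many Hilbert polynomials, and Hilbert schemes over $\Spec\mathbb Z$ finish the argument.

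The gap is the third bullet of Step~1, on which Step~3 rests. You cannot import from \cite{BernasconiMartin} a uniform $m$ such that $-mNK_X$ is very ample with vanishing higher cohomology. The paper takes only the volume bound and the Cartier index from there. As you note yourself, vanishing of $H^1(X,\mathcal O_X(tL))$ is not available over imperfect fields in characteristics $2,3,5$. So ``embed by $|L|$ into $\mathbb P^{P(1)-1}_k$'' is unjustified as written.

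The paper's proof is your fallback made precise, and the ingredient you do not supply is base change plus descent. Koll\'ar's theorem is a statement over algebraically closed fields. Because $X$ is geometrically integral, $X_{\bar k}$ is a reduced, possibly non-normal, projective surface, and $L_{\bar k}$ has the same Hilbert polynomial by flat base change. The version of \cite[Theorem~2.1.2]{KollarModuli} for polarized \emph{reduced} surfaces then gives a uniform $m$ with $L_{\bar k}^{\otimes m}$ very ample and $h^0$ uniformly bounded. Since $H^0(X,L^{\otimes m})\otimes_k\bar k\simeq H^0(X_{\bar k},L_{\bar k}^{\otimes m})$, faithfully flat descent makes $|L^{\otimes m}|$ a closed immersion over $k$ into a projective space of bounded dimension. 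No vanishing theorem is needed, and this is exactly where geometric integrality enters the boundedness step.

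Two minor corrections. Since $tL$ is Cartier and klt surface singularities are rational, you can compute $\chi(X,tL)$ on the minimal resolution, where $\mu^*L$ is trivial on exceptional curves. This gives $\chi(X,tL)=\chi(\mathcal O_X)+\tfrac12\,tL\cdot(tL-K_X)$ with no correction terms, so the Alexeev-type finiteness of resolution graphs you invoke is unnecessary. Also, the quadratic coefficient is $\tfrac12 m^2N^2K_X^2$, not $m^2N^2K_X^2$.

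Your direct Serre duality argument for $h^2(\mathcal O_X)=0$ is fine, and it is slightly more direct than the paper's detour through the resolution.
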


\begin{proof}
Let $X\in\mathcal X_{dP,\varepsilon}^{(p)}$. By Theorem~\ref{thm:main}, $q(X)\leq1+\frac1\varepsilon$. The volume bound \cite[Theorem~5.6]{BernasconiMartin} gives $K_X^2\leq V(\varepsilon)$. By \cite[Lemma~5.9 and Proposition~5.11]{BernasconiMartin}, there is a common integer $n=n(\varepsilon)>0$ such that $-nK_X$ is Cartier. These results apply in every positive characteristic without a tameness assumption. Since $-K_X$ is ample, $K_X^2>0$; as $n^2K_X^2\in\mathbb Z$, only finitely many values of $K_X^2$ can occur. Put $L:=\mathcal O_X(-nK_X)$.

It remains to replace the tame identity used in \cite[Corollary~5.8]{BernasconiMartin} by the conclusion of Theorem~\ref{thm:main}.  Let $\mu:Y\to X$ be the minimal regular resolution.  By Proposition~\ref{prop:klt-rational-sing}, $H^i(X,\mathcal O_X)\simeq H^i(Y,\mathcal O_Y)$ for all $i$.  
Because $Y$ is of del Pezzo type.  In particular, $K_Y$ is not pseudo-effective, so $H^0(Y,\omega_Y)=0$ and hence $H^2(X,\mathcal O_X)=0$ by Serre duality.  Thus
$
\chi(\mathcal O_X)=1-q(X),
$
which ranges over a finite set depending only on $\varepsilon$. Rationality of the singularities and the projection formula identify the Euler characteristic of a Cartier divisor on $X$ with that of its pullback to $Y$. The pullback has zero intersection with every exceptional curve. Riemann--Roch on $Y$ therefore gives
$$
P_X(t):=\chi\bigl(X,L^{\otimes t}\bigr)
=
\chi(\mathcal O_X)
+
\frac{nt(nt+1)}2K_X^2.
$$
Thus the polynomials $P_X$ form a finite set.

Let $\bar k$ be an algebraic closure of $k$. The surface $X_{\bar k}$ is integral, and the ample line bundle $L_{\bar k}$ has Hilbert polynomial $P_X$, since coherent cohomology commutes with field extension. By \cite[Definition~2.1.1 and Theorem~2.1.2]{KollarModuli}, polarized reduced projective surfaces with a fixed Hilbert polynomial are bounded in arbitrary characteristic. Applying this result to the finitely many polynomials above gives an integer $m>0$, independent of $X$ and $k$, such that $L_{\bar k}^{\otimes m}$ is very ample and its space of sections has uniformly bounded dimension. Here the polarization is the pullback of $L$; geometric normality of $X$ is not required.

Flat base change identifies
$$
H^0(X,L^{\otimes m})\otimes_k\bar k
\simeq H^0(X_{\bar k},L_{\bar k}^{\otimes m}).
$$
The evaluation map for $L^{\otimes m}$ is therefore surjective, and the resulting morphism to the projective space of its complete linear system becomes a closed immersion over $\bar k$. Faithfully flat descent makes it a closed immersion over $k$; see \cite[Tag 02KH and Tag 02L6]{StacksProject}. Consequently, every $X$ embeds into a fixed projective space $\mathbb P^N_k$, with Hilbert polynomial belonging to the finite set $\{P_X(mt)\}$. The universal families over the corresponding Hilbert schemes of $\mathbb P^N_{\mathbb F_p}$ give a projective flat family of finite type over $\Spec\mathbb Z$ containing every such $X$.
\end{proof}

\enlargethispage{3\baselineskip}
\begin{corollary}\label{cor:all-positive-char-bab}
Fix a rational number $\varepsilon>0$.  The class of geometrically integral $\varepsilon$-klt del Pezzo surfaces over fields of positive characteristic is bounded over $\Spec\mathbb Z$.
\end{corollary}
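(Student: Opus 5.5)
The corollary will follow by splitting the class according to the characteristic $p$ of the base field and treating the two ranges separately. For $p\in\{2,3,5\}$, Theorem~\ref{thm:BAB-small-char} already gives three bounded families, one for each prime, each of finite type over $\Spec\mathbb Z$ (indeed over $\Spec\mathbb F_p$). For $p>5$, we would invoke the theorem of Bernasconi--Martin \cite[Theorem~1.3]{BernasconiMartin}. It asserts that the class of geometrically integral $\varepsilon$-klt del Pezzo surfaces over fields of characteristic $p>5$ is bounded over $\Spec\mathbb Z$, uniformly in $p$. This uniformity in $p$ is the one point that needs care, because infinitely many primes are involved.

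To make this uniformity transparent, we would rerun the argument of Theorem~\ref{thm:BAB-small-char} with all characteristics treated at once. The inputs are as follows.
\begin{itemize}
\item The volume bound $K_X^2\le V(\varepsilon)$ and the Cartier index bound $n(\varepsilon)$ from \cite[Theorem~5.6, Lemma~5.9, Proposition~5.11]{BernasconiMartin}, both independent of $p$.
\item The irregularity bound. For $p>5$ the tame results of Bernasconi--Martin give $q(X)=0$; for $p\le5$ Theorem~\ref{thm:main} gives $q(X)\le 1+1/\varepsilon$. In every characteristic, therefore, $q(X)\le1+1/\varepsilon$.
\end{itemize}
Combined with $H^2(X,\mathcal O_X)=0$, this makes $\chi(\mathcal O_X)$ range over a finite set independent of $p$. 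The Riemann--Roch formula for $P_X(t)$ then shows that only finitely many Hilbert polynomials occur.

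Next we would apply Kollár's boundedness \cite[Theorem~2.1.2]{KollarModuli}, which holds over $\Spec\mathbb Z$. It gives a single $m$ such that $L^{\otimes m}$ embeds every such $X$ into a fixed $\mathbb P^N$ with finitely many possible Hilbert polynomials. The union of the corresponding Hilbert schemes of $\mathbb P^N_{\mathbb Z}$ is of finite type over $\Spec\mathbb Z$, and its universal family contains every such $X$. The descent from $\bar k$ to $k$ goes exactly as in the proof of Theorem~\ref{thm:BAB-small-char}.

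The main obstacle is making sure that the constant $m$ from Kollár's theorem is independent of the characteristic. We expect this to hold because that statement is formulated over $\Spec\mathbb Z$ for fixed Hilbert polynomial. The fallback is simply to cite \cite[Theorem~1.3]{BernasconiMartin} for $p>5$ and take the finite union of those families with the three families from Theorem~\ref{thm:BAB-small-char}. A finite union of families of finite type over $\Spec\mathbb Z$ is again of finite type, which concludes the proof.
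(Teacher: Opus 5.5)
Your proposal is correct and is essentially the paper's proof. The paper simply takes the union of the three families from Theorem~\ref{thm:BAB-small-char} for $p\in\{2,3,5\}$ with the Bernasconi--Martin boundedness theorem for $p>5$ (\cite[Theorem~5.12]{BernasconiMartin}), which is exactly your fallback, so the uniform rerun is unnecessary and the question of whether Koll\'ar's constant $m$ is independent of the characteristic never arises.
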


\begin{proof}
Theorem~\ref{thm:BAB-small-char} proves the assertion in characteristics $2$, $3$, and $5$.  In every other positive characteristic it is exactly the boundedness theorem of Bernasconi--Martin; see \cite[Theorem~5.12]{BernasconiMartin}.
\end{proof}

\section*{Acknowledgment}
We thank Joe Waldron and Omprokash Das for fruitful discussions,  insightful advice, and their immense support. This project began during the 2025 Summer Research Institute in Algebraic Geometry at Colorado State University. We thank the university and the organisers for their hospitality.  We are grateful to Fabio Bernasconi for fruitful discussion at Notions of Singularity in
Different Characteristics workshop at Banff International Research Station. The first author is grateful to the BIRS and the organisers for their hospitality. Gemini Pro and ChatGPT were used to find relevant references and to improve the readability of some sentences throughout the article. The first author was supported by NSF Grant DMS \#2401279 and by the Simons Foundation (award \#850684, JW) during the preparation of this paper.

\renewcommand{\footnotesize}{\normalfont\normalsize}
\end{document}